\documentclass[12pt]{article}
\overfullrule = 0pt

\usepackage{amssymb,amsmath,amsthm,epsfig}
\usepackage{eepic}
\usepackage{epic}
\usepackage{graphicx}
\usepackage{color}
\usepackage{ifpdf}
\usepackage{amsthm,amsmath,amssymb,bm}
\usepackage{mathrsfs}
\usepackage{float}
\usepackage{dsfont}
\usepackage{graphicx} %use graph format
\usepackage{subfigure}
\usepackage{algorithm, algorithmic}
\usepackage[title]{appendix}
\usepackage{dsfont}

\usepackage{tabularx}
\usepackage{multirow}
\usepackage{makecell}
\usepackage{booktabs}
\usepackage{threeparttable}
\usepackage[numbers,sort&compress]{natbib}

\topmargin -0.5in
\textheight 9.0in
\textwidth 6.5in
\oddsidemargin 0.0in
\evensidemargin 0.0in

\theoremstyle{plain}
\newtheorem{lem}{Lemma}[section]
\newtheorem{thm}[lem]{Theorem}

\newtheorem{prop}[lem]{Proposition}

  % 自定义，空心

\theoremstyle{definition}

\theoremstyle{remark}
\newtheorem{rem}{Remark}[section]

\begin{document}

\title{ \large\bf Perron-Frobenius operator filter for stochastic dynamical systems}

\author{Ningxin Liu\thanks{School of Mathematical Sciences,  Tongji University, Shanghai 200092, China. ({\tt nxliu@tongji.edu.cn}).}
\and
Lijian Jiang\thanks{School of Mathematical Sciences,  Tongji University, Shanghai 200092, China. ({\tt  ljjiang@tongji.edu.cn}).}
}

\date{}
\maketitle
\begin{center}{\bf Abstract}
\end{center}\smallskip

The filtering problems are derived from a sequential minimization of a quadratic function representing a compromise between model and data. In this paper, we use the Perron-Frobenius operator in stochastic process to develop a Perron-Frobenius operator filter. The proposed  method belongs to  Bayesian filtering and works for non-Gaussian distributions for nonlinear stochastic  dynamical systems.  The recursion  of the filtering  can be characterized  by the composition of Perron-Frobenius operator and likelihood operator. This gives a significant connection between the Perron-Frobenius operator and Bayesian filtering.    We numerically fulfil  the recursion  through approximating the Perron-Frobenius operator by Ulam's method.  In this way, the posterior measure is   represented  by a convex combination of the indicator functions in Ulam's method.
To get a low rank approximation for the Perron-Frobenius operator filter, we take  a spectral decomposition for the posterior measure by using  the eigenfunctions of the discretized Perron-Frobenius operator.  A convergence analysis is carried out and shows that the Perron-Frobenius operator filter achieves a higher convergence rate than the particle filter, which uses Dirac measures for the posterior. The proposed method is explored  for the data assimilation of  the stochastic dynamical systems. A few numerical examples are presented to illustrate the advantage  of the Perron-Frobenius operator filter over particle filter and extend Kalman filter.

\smallskip
{\bf keywords:} Perron-Frobenius operator, Bayesian   filtering, stochastic dynamical systems, particle filter

\section{Introduction}

In recent years, the operator-based approach has been  extensively exploited to analyze dynamical systems. The two primary candidates of the approach  are Perron-Frobenius operator and its dual operator, Koopman operator. Many data-driven methods have been developed for numerical approximation of these operators. The two operators are motivated to approximate the dynamical  system's behavior from different perspectives. The  Koopman  operator  is used to study the evolution of observations, while Perron-Frobenius operator (PFO) characterizes the transition of  densities. Therefore, the PFO deals with the system's uncertainties in the form of probability density functions of the state. In practice, it determines an absolutely continuous probability measure preserved by a given measurable transformation on a measure space.

The Perron-Frobenius operator has been widely used to characterize the global asymptotic behavior of dynamical systems derived from many different domains such as fluid dynamics \cite{appl2}, molecular dynamics \cite{appl4}, meteorology and atmospheric sciences \cite{appl6}, and to  estimate invariant sets or metastable sets with a toolbox like in \cite{GIAO}. It is of great interest to study  the invariant density of  Perron-Frobenius operator \cite{ind1} and design  efficient numerical approaches. Then  one can apply ergodic theorems to the statistical properties  of  deterministic dynamical systems.

Since  PFO is able to transport density of a Markov process, its approximation is necessary for numerical model transition probability of the Markov process. Many different numerical methods \cite{Ulam1}, such as Ulam's method and Petrov-Galerkin method, are proposed for approximation of the Perron-Frobenius operator. As the PFO operates on infinite-dimensional spaces, it is natural to project it onto a finite-dimensional subspace spanned by suitable basis functions. The projection is usually accomplished by Galerkin methods with weak approximation. It was originally  proposed by Ulam \cite{Ulam}, who suggested that one can study the discrete Perron-Frobenius operator on the finite-dimensional subspace $L^1$ of indicator functions according to a finite partition of the region. Convergence analysis of Ulam's method is discussed in many literatures \cite{conv1,conv3}.

%stochastic dynamical system

The classical filtering problems in stochastic processes are investigated  in \cite{ExKF,sds3}. In the paper, the models of filtering problems are considered with discrete-time and continuous-time stochastic processes defined by the solutions of SDEs, which can model a majority of stochastic dynamical systems in the real world. The filtering methods have been widely used for  geophysical applications, such as oceanography \cite{ocea}, oil recovery \cite{oil}, atmospheric science, and weather forecast \cite{atmos}. Remarkably, as one of the filtering methods,  Kalman filter \cite{Kalman} has been well-known  for low-dimensional engineering applications in linear Gaussian models, and it has been also developed and utilized in many other fields \cite{KF1,KF2, KF-DMD}.  For nonlinear problems, the classical filters, such as 3DVAR \cite{3dvar}, Extended Kalman filter \cite{ExKF} and Ensemble Kalman filter \cite{EnKF}, usually invoke a Gaussian ansatz. They are often used in the scenarios with small noisy observation and high dimensional spaces. However, these extensions rely on the invocations of Gaussian assumption. As a sequential Monte Carlo method,    particle filter \cite{pf1} is able to work well for the  nonlinear and non-Gaussian filtering problems. It can be proved to estimate true posterior filtering problems in the limit of  large number of particles.

Although the particle filter (PF) can treat the nonlinear and non-Gaussian filtering problems, it has some limitations in practice. First of all, particle filter handles well in low-dimensional systems, but it may occur degeneracy \cite{degeneracy} when the systems have very large scale. It means that the maximum of the weights associated with the sample ensemble converges to one as the system dimension tends to infinity.  To avoid degeneracy, it requires a great number of particles that scales exponentially  with the system dimension. This is a manifestation of the curse of dimensionality. Resampling, adding jitter and localisation are introduced to circumvent this curse of dimensionality and get the accurate estimation of high-dimensional probability density functions (PDFs). The particle filter also does not  perform well in geophysical applications of data assimilation, because the data in these application have  strongly constraints on particle location  \cite{DA-ma}. This impacts on the filtering performance. Besides, the prior knowledge of the transition probability density functions in particle filter is necessary to be known, and the efficient sampling methods such as acceptance-rejection method and Markov chain Monte Carlo, need to be used for  complicated  density functions. The sampling is particularly a challenge in high dimensional  spaces.      To overcome these difficulties,  we propose a Perron-Frobenius operator filter (PFOF), which does not use particles and any sampling methods. The information of prior probability density is  not required  in the method, which needs data information instead. The method works  well in nonlinear and non-Gaussian models.

In this paper, we propose  PFOF  to treat  nonlinear stochastic  filtering problems. The method transfer filtering distribution with the Perron-Frobenius operator. For filtering problems, the update of filtering distribution involves   two steps: predication and analysis. In prediction, the density is transported with a transition density function given by a Markov chain of the underlying system. In analysis, the density is corrected  by Bayes' rule under the likelihood function given by observations. Thus, the update of filtering distribution can be expressed as a composition of  PFO and likelihood functions. In the simulation process,  Ulam' method is used to discretize the PFO and project it onto a finite-dimensional space of indicator functions. Hence the filtering density is also projected onto the subspace and is represented by a linear convex combination of the indicator functions. The recursion  of filtering distribution is then expressed by a linear map of weights vectors associated with the basis functions via the PFO and likelihood function. For the high dimensional problems, we propose a low-rank PFOF (lr-PFOF) using a spectral decomposition. To this end,  we first use the eigenfunctions of the discretized PFO to represent the spectral decomposition of the density functions. Then we make a truncation of the decomposition and use the  eigenfunctions corresponding to the first few dominant eigenvalues. This can improve the online assimilation efficiency. The idea of PFO  is extended to the continuous-time filtering problems. In these problems, Zakai equation  characterizes  the transition of filtering density. We utilize the approximation of the  Perron-Frobenius operator to compute the Zaikai equation  and obtain   the posterior density functions  of the continuous-time filtering problems.

We compare the proposed method with the particle filter.  For PFOF, we give an error estimate in the total-variance distance between the approximated posterior  measure and the truth posterior. The estimate implies  that PFOF achieves a convergence rate $O({1\over N})$, which is  faster than particle filters with the same number $N$  of basis functions. Our numerical results  show that PFOF  also renders  better accuracy  than that of extended Kalman filter.

The rest  of the paper is organized as follows. In Section \ref{sect2}, we express  the   Bayesian filter in terms of the Perron-Frobenius operator. In Section \ref{sect3}, we present the recursion  of the filtering empirical measure with an approximated Perron-Frobenius operator. Then we derive PFOF as well as lr-PFOF, and analyze an error estimate subsequently. PFOF is also extended to the continuous-time filtering problems. A comprehensive comparison  with particle filter is give in Section \ref{sect4}. A few numerical results of stochastic filtering  problems are given in Section \ref{sect5}. Section \ref{sect6} concludes the paper in a summary.

%%%%%%%%%%%%%%%%%%%%%%%%%%%%%%%%%%%%%%%%%%%%%%%%%%%
\section{Preliminaries}
\label{sect2}

We give a background review on Perron-Frobenius operator \cite{PFO} (PFO) and Bayesian filter in this section. The Perron-Frobenius operator  transports the distributions over state space and describes the  stochastic behavior of the dynamical systems. The framework of Bayesian filter is introduced and summarized as a recursive formula with PFO.

\subsection{Perron-Frobenius operator}
\label{sect2-1}
Let $X$ be a metric space, $\mathcal{B}$ the Borel-$\sigma$-algebra on $X$, and $\Phi:X\rightarrow X$ a nonsingular transformation. Let $\mathcal{M}$ denote the space of all finite measures on (X, $\mathcal{B}$) and $\mu$ is a finite measure. The phase space is defined on   a measure space (X, $\mathcal{B}$, $\mu$). The Perron-Frobenius operator $\mathcal{P}:\mathcal{M}\rightarrow\mathcal{M}$ is a linear and  infinite-dimensional operator defined by
\begin{equation}
\label{measure-PFO}
\mathcal{P}\mu(A)=\mu(\Phi^{-1}(A)), \quad \forall A\in \mathcal{B}.
\end{equation}
The PFO is a linear, positive and  non-expansive operator, and  hence a Markov operator. We can also track the action on distributions in the  function space. In the paper, we denote $L^1(X):=L^1(X,\mathcal{B},\mu)$.   Let  $f\in L^1(X)$ be  the probability density function (PDF) of a $X$-valued random variable $x$. Since $\Phi$ is a nonsingular with respect to $\mu$, there is a $g\in L^1(X)$ satisfying $\int_{\Phi^{-1}(A)}fd\mu=\int_{A}gd\mu$ for all $A\in \mathcal{B}$. Then g is the function characterizing the distribution of $\Phi(x)$.  The mapping $f\mapsto g$, defined uniquely by a linear operator $\mathcal{P}:L^1(X)\rightarrow L^1(X)$ :
\begin{equation}
\label{density-PFO}
\int_{A}\mathcal{P}f \,d\mu=\int_{\Phi^{-1}(A)}f \,d\mu, \quad \forall A\in \mathcal{B}.
\end{equation}
The  operator $\mathcal{P}$ is called the Perron-Frobenius operator. With the definition (\ref{measure-PFO}) and (\ref{density-PFO}), we  make the connection between  probability density function and the  measure associated with the PFO. When $f$ is a probability density function with respect to an absolutely continuous probability measure $\mu\in \mathcal{M}(X)$, $g$ is another PDF with respect to the absolutely continuous probability measure $\mu\circ \Phi^{-1}$. In addition, the measure $\mu\in\mathcal{M}(X)$ is an invariant measure of $\mathcal{P}$ when  $\mathcal{P}\mu=\mu$ holds.

 Let $\Psi:\mathbb{R}_{+}\times X \rightarrow X$  be a nonsingular flow map for a deterministic continuous-time dynamical system. Then  $\Psi_{\tau}:X\rightarrow X$ is nonsingular for each $\tau\in\mathbb{R}^{+}$. The transfer operator $\mathcal{P}_{\tau}:L^1(X)\rightarrow L^1(X)$ is time-dependent and has an analogous definition to (\ref{density-PFO}), such that
 \[
 \int_{A}\mathcal{P}_{\tau}f \,d\mu=\int_{\Psi_{\tau}^{-1}(A)}f \,d\mu.
  \]
The  $\{\mathcal{P}_{\tau}\}_{\tau\geq0}$ forms a  semigroup of  the Perron-Frobenius operators. We note  that  $\{\mathcal{P}_{\tau}\}_{\tau\geq0}$ has an infinitesimal generator $\mathscr{A}_{PF}$ by Hille-Yosida Theorem.

Let us consider the  Perron-Frobenius operator in stochastic dynamic systems and  the infinitesimal generator of PFO associated to the stochastic solution semiflow induced by a stochastic dynamical equation (SDE). Let $b:X\rightarrow X$ and $\sigma:X\rightarrow X$ be smooth time-invariant functions. Suppose that a stochastic process $x_t$ is the solution to the time-homogeneous stochastic differential equation:
\begin{equation}
\label{SDE-dynamic}
dx_t=b(x_t)dt+\sigma(x_t) dW_t, \quad x(t_0)\thicksim \rho_0,
\end{equation}
where $W_t$  is a standard Brownian motion. In this case, the distribution of the stochastic process  $x_t$ can be described by a semigroup of Perron-Frobenius operators $\{\mathcal{P}_{\tau}\}_{\tau> 0}$ on $L^{1}(X)$. The generator of $\{\mathcal{P}_{\tau}\}_{\tau> 0}$ is a second-order differential operator on $X$. The PDE defined by the generator describes the evolution of the probability density  of $x_t$.

Suppose that $\Phi$ is the mapping of the stochastic dynamical system (\ref{SDE-dynamic}) and $\Phi(x)$ is an $X$-valued random variable over the probability space (X, $\mathcal{B}$, $\mu$). Given a stochastic transition function $p_{\tau}:X\times\mathcal{B}\rightarrow[0,1]$ induced by $\Phi$, we consider probability measure $\mu$ translated with a linear operator defined in terms of the transition function $p_{\tau}(x,\cdot)$.  The stochastic PFO \cite{SPFO} $\mathcal{P}_{\tau}:\mathcal{M}\rightarrow\mathcal{M}$ is defined by
\begin{equation}
\label{measure-trans}
\mathcal{P}_{\tau}\mu(A)=\int_{X}p_{\tau}(x,A)\,d\mu(x), \quad \forall A\in \mathcal{B}.
\end{equation}
If $p_{\tau}(x,\cdot)$ is absolutely continuous to $\mu$ for all $x\in X$, there exists a nonnegative transition density function $q_{\tau}:X\times X\rightarrow \mathbb{R}$ with $q_{\tau}(x,\cdot)\in L^1(X)$ and
\begin{equation*}
\label{prob-trans}
P(x_{t+\tau}\in A|x_t=x)=\int_Aq_{\tau}(x,y)d\mu(y),\quad \forall A\in\mathcal{B}.
\end{equation*}
The transition density function is the infinite-dimensional counterpart of the transition matrix for  a Markov chain. Now we define the stochastic PFO associated with transition density. If $f\in L^1(X)$ is a probability density function, the Perron-Frobenius semigroup of operators $\mathcal{P}_{\tau}:L^1(X)\rightarrow L^1(X), \tau>0$, is defined by
\begin{equation*}
\label{density-trans}
\mathcal{P}_{\tau}f(y)=\int_{X}q_{\tau}(x,y)f(x)\,d\mu(x).
\end{equation*}
The PFO $\mathcal{P}_{\tau}$ defined here translates the probability density function of $x_t$ with time. Let $\rho$ be a probability density. The infinitesimal generator $\mathscr{A}_{PF}$ of $\mathcal{P}_{\tau}$ is given by
\begin{equation*}
\label{PFoperator}
\mathscr{A}_{PF}\rho=-\nabla\cdot(b\rho)+\frac{1}{2}\nabla\cdot \nabla \cdot (\sigma \sigma^T  \rho).
\end{equation*}
We assume that $\widetilde{\rho}:[0,\infty)\times X\rightarrow [0,\infty)$ is the probability density function of the solution $x_t$ in (\ref{SDE-dynamic}) and $\rho_0$ is the density function of the initial condition $x_0$. Then  $\widetilde{\rho}$ solves the Fokker-Planck equation,
\begin{equation*}
\label{F-P}
   \left\{
 \begin{aligned}
\frac{\partial \widetilde{\rho}}{\partial t}&=-\nabla\cdot(b\widetilde{\rho})+\frac{1}{2}\nabla\cdot\nabla \cdot (\sigma \sigma^T \widetilde{\rho}),\quad (t,x)\in  (0,\infty)\times X,\\
\widetilde {\rho}(0,x)&= \rho_0(x).
 \end{aligned}
 \right.
\end{equation*}
If the phase space $X$ is compact and $b\in C^3(X,X)$, the equation has a unique solution, which is given by  \[
\widetilde {\rho}(t,x)=\mathcal{P}_{t}\rho_0(x).
\]

\subsection{Bayesian filter}

In this section, we present the framework of Bayesian filter in discrete time from the perspective of  Bayes' rule. In filtering problems, a state model  and an observation model are combined  to estimate the posterior distribution, which is a conditional distribution  of the state given by observation. Let us consider the dynamical model governed by the flow $\Psi\in C(\mathbb{R}^n,\mathbb{R}^n)$ with noisy observations $y=\{y_j\}_{j\in\mathbb{Z}^{+}}$ depending on the  function $h(x):\mathbb{R}^n\rightarrow\mathbb{R}^p$:
\begin{equation}\label{dynamic}
		\left\{
\begin{aligned}
x_{j+1}&=\Psi(x_j)+\xi_j,\; j\in\mathbb{N},\;x_0\thicksim \rho_0,\\
y_{j+1}&=h(x_{j+1})+\eta_{j+1},\;j\in\mathbb{N},
\end{aligned}
		\right.
	\end{equation}
where $\xi:=\{\xi_j\}_{j\in\mathbb{N}}$ is an i.i.d. sequence with $\xi_j\thicksim N(0,\Sigma)$ and $\eta:=\{\eta_j\}_{j\in\mathbb{Z}^{+}}$ is an i.i.d. sequence with $\eta_j\thicksim N(0,R)$. Let $Y_j=\{y_l\}_{l=1}^j$ denote the data up to time $t_j$. The filtering problem aims to  determine the posterior PDF $p(x_j|Y_j)$ of the random variable $x_j|Y_j$ and the sequential updating of the PDF as the data  increases. The Bayesian filtering involves two steps: prediction and analysis. It  provides a derivation of $p(x_{j+1}|Y_{j+1})$ from  $p(x_{j}|Y_{j})$. The prediction is concerned with  the map $p(x_j|Y_j)\mapsto p(x_{j+1}|Y_j)$ and the analysis derives the map   $p(x_{j+1}|Y_j)\mapsto p(x_{j+1}|Y_{j+1})$ by Bayes's formula.

\textbf{Prediction}
\begin{equation}
\label{filter-pre}
\begin{aligned}
p(x_{j+1}|Y_j)&=\int_{\mathbb{R}^n}p(x_{j+1}|Y_j,x_j)p(x_{j}|Y_j)dx_j \\
&=\int_{\mathbb{R}^n}p(x_{j+1}|x_j)p(x_{j}|Y_j)dx_j.
\end{aligned}
\end{equation}
Note that $p(x_{j+1}|Y_j,x_j)=p(x_{j+1}|x_j)$, because $Y_j$ provides indirect information about determining $x_{j+1}$. Since $p(x_{j+1}|x_j)$ is specified by the underlying model (\ref{dynamic}) and

\begin{equation}
\label{filter-trans}
p(x_{j+1}|x_j)\propto {\rm exp}(-\frac{1}{2}|\Sigma^{-\frac{1}{2}}(x_{j+1}-\Psi(x_j))|^{2}),
\end{equation}
the prediction provides the map from $p(x_{j}|Y_j)$ to $p(x_{j+1}|Y_j)$. Let $\widehat{\mu}_j$be the prior probability measure corresponding to the density $p(x_{j}|Y_{j-1})$ and $\mu_j$  be the posterior probability measure on corresponding to the density $p(x_{j}|Y_j)$. The stochastic process $\{x_j,j\in\mathbb{N}\}$ of (\ref{dynamic}) is a Markov chain with the transition kernel $p(\cdot,\cdot)$ determined by $p(x_{j},x_{j+1})=p(x_{j+1}|x_{j})$. Then we can rewrite (\ref{filter-pre}) as
\begin{equation}
\label{filter-pre1}
\widehat{\mu}_{j+1}(\cdot)=(\mathcal{P}\mu_j)(\cdot):=\int_{\mathbb{R}^n}p(x_j,\cdot)\mu_j(dx_j)=\int_{\mathbb{R}^n}p(x_j,\cdot)d\mu(x_j)\footnote{Refer to \cite{measure}, if the function $f\in L^1(X)$ on a measure space (X, $\mathcal{B}$, $\mu$) is said to be $\mu$ integrable, we have $\int fd\mu=\int f(x)\mu(dx)=\int f(x)d\mu(x)$.}.
\end{equation}
In particular, the operator  $\mathcal{P}$ coincides with the  Perron-Frobenius operator defined in (\ref{measure-trans}).

\textbf{Analysis}
\begin{equation}
\label{filter-ana}
\begin{aligned}
p(x_{j+1}|Y_{j+1})&=p(x_{j+1}|Y_{j},y_{j+1})\\
&=\frac{p(y_{j+1}|x_{j+1},Y_{j})p(x_{j+1}|Y_{j})}{p(y_{j+1}|Y_{j})}\\
&=\frac{p(y_{j+1}|x_{j+1})p(x_{j+1}|Y_{j})}{p(y_{j+1}|Y_{j})}.
\end{aligned}
\end{equation}
Note that $p(y_{j+1}|x_{j+1},Y_{j})=p(y_{j+1}|x_{j+1})$ and Bayes's formula is used in the second equality. The likelihood function $p(y_{j+1}|x_{j+1})$ is determined by the observation model: $p(y_{j+1}|x_{j+1})\propto{\rm exp}(-\frac{1}{2}|R^{-\frac{1}{2}}(y_{j+1}-h(x_{j+1}))|^2)$. Let
\begin{equation}
\label{likelihood}
g_j(x_{j+1}):={\rm exp}(-\frac{1}{2}|R^{-\frac{1}{2}}(y_{j+1}-h(x_{j+1}))|^2).
\end{equation}
The analysis provides a map from $p(x_{j+1}|Y_{j})$ to $p(x_{j+1}|Y_{j+1})$, so we can represent the update of
the measure $\mu_{j+1}(\cdot)$ by
\begin{equation}
\label{filter-ana1}
\mu_{j+1}(\cdot)=(L_j\widehat{\mu}_{j+1})(\cdot):=\frac{g_j(x_{j+1})\widehat{\mu}_{j+1}(\cdot)}
{\int_{\mathbb{R}^n}g_j(x_{j+1})\widehat{\mu}_{j+1}(\cdot)},
\end{equation}
where the likelihood operator $L_j$ is defined by
\begin{equation}
\label{L-O}
(L_j\mu)(dx)=\frac{g_j(x)\mu(dx)}{\int_{\mathbb{R}^n}g_j(x)\mu(dx)}.
\end{equation}
In general, the prediction and analysis provide the mapping from $\mu_j$ to $\mu_{j+1}$. The prediction  maps $\mu_j$ to $\widehat{\mu}_{j+1}$ through the Perron-Frobenius operator $\mathcal{P}$, while the analysis maps $\widehat{\mu}_{j+1}$ to $\mu_{j+1}$ through the likelihood  operator $L_j$. Then we represent the $\mu_{j+1}$  using formulas  (\ref{filter-pre1}) and (\ref{filter-ana1}), and summarize Bayesian filtering as
\begin{equation}
\label{filter-op}
\mu_{j+1}=L_j\mathcal{P}\mu_{j}.
\end{equation}
The $\mu_{0}$ is assumed to be a known initial probability measure. We note that $\mathcal{P}$ does not depend on $j$, because the prediction step is governed by the same Markov process at each $j$. However, $L_j$ depends on $j$ because  the different observations are used to compute the likelihood at each $j$. In this way, the evolution of $\mu_j$ processes through a linear operator $\mathcal{P}$ and a nonlinear operator $L_j$. The approximation of $\mu_j$ can be achieved by the numerical iteration of (\ref{filter-op}).

%%%%%%%%%%%%%%%%%%%%%%%%%%%%%%%

\section{Bayesian filter in terms of Perron-Frobenius operator}
\label{sect3}
It is noted  that the Perron-Frobenius operator translates a probability density function with time according to the flow of the dynamics. We extend the idea to  filtering problems to represent the transition of the posterior probability density function, i.e., the filtering distribution. Therefore, we propose a filtering method: Perron-Frobenius operator filter (PFOF). In the proposed method, the density function is projected onto an approximation subspace spanned by indicator functions. The fluctuation of the density function, which is approximated by weights vector,  is  transferred by PFO and likelihood operator. Moreover, we present a low-rank Perron-Frobenius operator filter (lr-PFOF), which is a modified version of the PFOF.

\subsection{Perron-Frobenius operator filter}
The iteration (\ref{filter-op}) is helpful to design a  filter method. According to definition (\ref{filter-pre1}), the operator  $\mathcal{P}$ in the iteration is Perron-Frobenius operator corresponding to the flow $\Psi$  of the model (\ref{dynamic}). Based on the idea, we propose a Perron-Frobenius operator filter, which utilizes Ulam's method \cite{Ulam}  to approximate operator $\mathcal{P}$ in the iteration. In PFOF, we simply use $\mathcal{P}$ for  $\mathcal{P}_{\tau}$  because the discrete time steps of the state model keep the same. In this manner, the iteration of filtering distribution of PFOF becomes

\begin{equation}
\label{PFOF-op}
\mu^N_{j+1}=L_j\mathcal{P}^N\mu^N_j, \quad \mu^N_0=\mu_0,
\end{equation}
where $\mathcal{P}^N$ calculated by the Ulam's method is an approximation of $\mathcal{P}$. Ulam's method  is a Galerkin projection  method to discretize the Perron-Frobenius operator. We first give a discretisation of the phase space. Let $B=\{\mathbb{B}_1,\cdots,\mathbb{B}_N\}\subset\mathcal{B}$ be a finite number of measure boxes and a disjoint partition of phase space $X$. The indicator function is a piecewise constant function and is defined  by
\begin{equation}
\label{indi}
\mathds{1}_{\mathbb{B}_i}(x)=\left\{
\begin{aligned}
&1,&  \rm if\;x\in \mathbb{B}_i,\\
&0,&  \rm otherwise.&
\end{aligned}
\right.
\end{equation}
Ulam proposed to use the space of  a family of indicator functions $\{\mathds{1}_{\mathbb{B}_1},\cdots,\mathds{1}_{\mathbb{B}_N}\}$ as the approximation space for the  PFO. We define the projection space $V_N:={\rm span}\{\mathds{1}_{\mathbb{B}_1},\cdots,\mathds{1}_{\mathbb{B}_N}\}$. The $V_N\in L^1(X)$ is regarded as an approximation subspace of $L^1(X)$. For each $\rho\geq 0$ in $L^1(X)$, we define  the operator $\pi_N:L^1(X)\rightarrow V_N$  by
\begin{equation}
\label{pi-N}
\pi_N \rho = \sum_{i=1}^N\omega^{(i)}\mathds{1}_{\mathbb{B}_i}, \quad {\rm where} \quad \omega^{(i)}:=\frac{\int_{\mathbb{B}_i}\rho\,d\mu}{\mu(\mathbb{B}_i)}.
\end{equation}
Then $\pi_N$ is the projection onto $V_N$. Due to the projection, we define the discretized PFO $\mathcal{P}^N$ as
\begin{equation*}
\mathcal{P}^N=\pi_N\circ \mathcal{P}.
\end{equation*}
We can represent the  linear map $\mathcal{P}^N|_{V^1_N}: V_N^1\rightarrow V_N^1, \; \text{where} \; V_N^1:=\big\{f\in V_N:\int|f|d\mu=1\big \}$ by a matrix  $P^N=(P^N_{ij})\in\mathbb{R}^{N\times N}$ whose  entries  $P^N_{ij}=\frac{1}{\mu(\mathbb{B}_i)}\int_{\mathbb{B}_i}\mathcal{P}\mathds{1}_{\mathbb{B}_j}d\mu$. The entries characterizes  the transition probability   from the box $\mathbb{B}_i$ to box $\mathbb{B}_j$ under the flow map  $\Psi$. We can show
\begin{equation}
\label{pij}
P^{N}_{ij}=\frac{\mu(\mathbb{B}_i\cap \Psi^{-1}(\mathbb{B}_j))}{\mu(\mathbb{B}_i)}.
\end{equation}
A Markov chain for $\Psi$ arises as the discretization $\mathcal{P}^N$  of the PFO, and the Markov chain   has transition matrix $P^{N}$. So the Ulam's method can be described either in terms of  the operator $\mathcal{P}^N$ or the matrix $P^N$. By the projection $\pi_N$, the  density $\rho$ can be expressed as a vector $\mathbf{W}=[\omega^{(1)},\cdots,\omega^{(N)}]$, where $\omega^{(i)}$ is  the weight of the basis function $\mathds{1}_{\mathbb{B}_i}$. Since  the entries $P^{N}_{ij}$ represent the transition probability from $\mathbb{B}_i$ to $\mathbb{B}_j$, they can be estimated by a Monte-Carlo method, which gives  a numerical realization of Ulam's method \cite{Ulam1}.  We randomly choose a large number of points $\{x_i^l\}_{l=1}^n$ in each $\mathbb{B}_i$ and count the number of times $\Psi(x_i^{l})$  contained in box $\mathbb{B}_j$. Then  $P^{N}_{ij}$ is calculated by
\begin{equation}
\label{pij-est}
P^{N}_{ij}\approx P^N_{n,ij}=\frac{1}{n}\sum_{l=1}^n\mathds{1}_{\mathbb{B}_j}(\Psi(x_i^l)).
\end{equation}
The Monte-Carlo method is used as an approximation to the integrals (\ref{pij}). The convergence of the Ulam's method   depends on the choice of the partition of the region and the number of points. Based on indicator basis functions, we denote the the empirical density in the PFOF with respect to the measure $\mu_j^N$ as
\begin{equation}
\label{empdf}
 \rho_j^N(x)=\sum_{i=1}^{N} \omega^{(i)}_j\mathds{1}_{\mathbb{B}_i}(x),
\end{equation}
where $\mathds{1}_{\mathbb{B}_i}(\cdot)$ is the  indicator function defined by (\ref{indi}) and $j$ represents the index of time sequence. In this way, the density $\rho_j^N$ can be represented  by the vector of the weights  $\mathbf{W}_{j} = [\omega_j^{(1)},\cdots,\omega_j^{(N)}]$. Suppose that $\mathbf{W}_{j} = [\omega_j^{(1)},\cdots,\omega_j^{(N)}]$ and $\mathbf{W}_{j+1} = [\omega_{j+1}^{(1)},\cdots,\omega_{j+1}^{(N)}]$ are separately the weights of $\pi_N \rho_j$ and $\pi_N \rho_{j+1}:=\pi_N \mathcal{P}\rho_j$. When the region is evenly divided, the evolution of density functions becomes a Markov transition equation of the weights:

\begin{equation}
\label{Markov-trans}
\mathbf{W}_{j+1}=\mathbf{W}_{j}P^{N},
\end{equation}
where $P^N$ is the matrix form of discretized PFO. We consider the projection of  the $\mathcal{P}\rho_j$, i.e.,
\begin{equation}
\label{pi-PT}
\pi_N\mathcal{P}\rho_j= \sum_{i=1}^{N} \omega^{(i)}_{j+1}\mathds{1}_{\mathbb{B}_i}.
\end{equation}
In addition, note that
\begin{equation*}
\pi_N\mathcal{P}\rho_j = \pi_N\mathcal{P}\sum_{i=1}^{N} \omega^{(i)}_j\mathds{1}_{\mathbb{B}_i}
= \sum_{i=1}^{N} \omega^{(i)}_j \pi_N(\mathcal{P}\mathds{1}_{\mathbb{B}_i}).
\end{equation*}
We denote $\pi_N(\mathcal{P}\mathds{1}_{\mathbb{B}_i})=\sum_{k=1}^{N} c_{ik}\mathds{1}_{\mathbb{B}_k}$, where
\begin{equation*}
\begin{aligned}
c_{ik}=\frac{\int_X\mathcal{P}(\mathds{1}_{\mathbb{B}_i})\mathds{1}_{\mathbb{B}_k}dx}{\mu(\mathbb{B}_k)}= \frac{\int_{\mathbb{B}_k}\mathcal{P}(\mathds{1}_{\mathbb{B}_i})dx}{\mu(\mathbb{B}_k)}\\
= \frac{\int_{\Psi^{-1}(\mathbb{B}_k)}\mathds{1}_{\mathbb{B}_i}dx}{\mu(\mathbb{B}_k)}= \frac{\mu(\mathbb{B}_i\cap \Psi^{-1}(\mathbb{B}_k))}{\mu(\mathbb{B}_k)}.
\end{aligned}
\end{equation*}
Then we have
\begin{equation*}
\begin{aligned}
\pi_N\mathcal{P}\rho_j&=\sum_{i=1}^{N}\omega^{(i)}_j\sum_{k=1}^{N}c_{ik}\mathds{1}_{\mathbb{B}_k}=\sum_{k=1}^{N}\sum_{i=1}^{N}\omega^{(i)}_jc_{ik}\mathds{1}_{\mathbb{B}_k}\\
&=\sum_{k=1}^{N}\sum_{i=1}^{N}\frac{\mu(\mathbb{B}_i)}{\mu(\mathbb{B}_k)}\omega^{(i)}_jP^N_{ik}\mathds{1}_{\mathbb{B}_k}.
\end{aligned}
\end{equation*}
Comparing to (\ref{pi-PT}), we get

\begin{equation}
\label{weight-trans}
\omega^{(k)}_{j+1}=\sum_{i=1}^{N}\frac{\mu(\mathbb{B}_i)}{\mu(\mathbb{B}_k)}\omega^{(i)}_jP^N_{ik}.
\end{equation}
Thus, if we give a uniform partition of the $X$, i.e., $\mu(\mathbb{B}_i)=\mu(\mathbb{B}_j),\;\forall i,j\in N$, then we get the result (\ref{Markov-trans}). With the expression, we design the following prediction step and analysis step to approximate the posterior distribution $p(x_{j+1}|Y_{j+1})$.

\textbf{Prediction} In this step, we give a set of boxes $\{\mathbb{B}_1,\cdots,\mathbb{B}_N\}\subset\mathcal{B}$, which is a uniform partition  of $X$, and denote the  mass point of each box as $x^{(i)}, i=1,\cdots,N$. Define $\widehat{\mathbf{W}}_{j} = [\widehat{\omega}_j^{(1)},\cdots,\widehat{\omega}_j^{(N)}]$  as the prior weight vector and $\mathbf{W}_{j} = [\omega_j^{(1)},\cdots,\omega_j^{(N)}]$ as the posterior weight vector. In  equation (\ref{filter-pre1}), we note that the prior density $p(x_{j+1}|Y_j)$ is computed under the linear operator $\mathcal{P}$.
To discretize  the formula $\widehat{\mu}_{j+1}=\mathcal{P}\mu_j$, we build a map between the weights of the density function,

\begin{equation*}
\widehat{\mathbf{W}}_{j+1}=\mathbf{W}_jP^N.
\end{equation*}
 The formula contains the prior information of the underlying system (\ref{dynamic}) because the PFO in the formula is defined by the transition kernel $p$ of the system. With the basis functions $\mathds{1}_{\mathbb{B}_i}(\cdot)$, the empirical prior measure is given by
\begin{equation*}
\widehat{\mu}_{j+1}^N = \sum_{i=1}^{N}\widehat{\omega}^{(i)}_{j+1}\mathds{1}_{\mathbb{B}_i}(dx).
\end{equation*}

\textbf{Analysis} In this step, we derive the posterior measure $\mu_{j+1}^N$. To achieve this, we apply Bayes's formula (\ref{filter-ana}) on weights and update the weights by
\begin{equation}
\label{reweight}
\omega^{(i)}_{j+1}=\widetilde{\omega}^{(i)}_{j+1}/(\sum_{n=1}^N\widetilde{\omega}^{(n)}_{j+1}),
\quad \widetilde{\omega}^{(i)}_{j+1}=g_j(x^{(i)})\widehat{\omega}^{(i)}_{j+1},
\end{equation}
where $g_j(x)$ given by (\ref{likelihood}) denotes the likelihood function as before. Then the $\mu_{j+1}^N$ approximated by the indicator measure is given by
\begin{equation*}
\mu_{j+1}^N = \sum_{i=1}^{N}\omega^{(i)}_{j+1}\mathds{1}_{\mathbb{B}_i}(dx).
\end{equation*}
Note that we choose the mass point $x^{(i)}$ of each box $\mathbb{B}_i$ to calculate  $g_j(x)$, i.e., the likelihood function. It is a reasonable choice to approximate the likelihood function of the points  in the $\mathbb{B}_i$. In both prediction step and analysis step, they only evolve weights $\{\omega^{(i)}_j\}_{i=1}^N$ into $\{\omega^{(i)}_{j+1}\}_{i=1}^N$ via $\{\widehat{\omega}^{(i)}_{j+1}\}_{i=1}^N$, and provide a transform  from $\mu_j^N$ to $\mu_{j+1}^N$. The complete procedure is summarized in Algorithm \ref{alg:2}, named Perron-Frobenius operator filter. The algorithm consists of two phases: offline phase to compute $P^N$ by Ulam's method, and online phase to update the approximation of the posterior measure. Besides, the standard   Ulam's method becomes inefficient in high-dimensional dynamical systems due to the curse  of dimensionality. For this case, we may use the sparse Ulam method \cite{sUlam} instead. It constructs an  optimal approximation subspace and costs less computational effort than the standard Ulam's method when a certain accuracy is imposed.

\begin{algorithm}[htb]
	\caption{Perron-Frobenius operator filter}
	\label{alg:1}
	\begin{algorithmic}
%		\REQUIRE data matrix $\textbf{Y}_0$,$\textbf{Y}_1$
%		\ENSURE $\textbf{Q}_k$, $\textbf{R}_k$, DMD modes $w_i$, eigenvalues$\lambda_i$
     	\STATE \textbf{Offline:}
        \STATE Compute $P^N$ by Ulam's method
     \end{algorithmic}
     \begin{algorithmic}
        \STATE \textbf{Online:}
     \end{algorithmic}
     \begin{algorithmic}[1]
%        \STATE Set $\mathbb{B}_n, n=1,\cdots,N$ be the uniformly divided grids
		\STATE Set $j=0$ and $\mu_0^N(dx_0)=\mu_0(dx_0)$, compute $\omega^{(i)}_0=\frac{\int_{\mathbb{B}_i}\mu_0dx_0}{\mu(\mathbb{B}_i)}$
		\STATE Let $\mathbf{W}_j=[\omega^{(1)}_j,\cdots,\omega^{(N)}_j]$, compute $\widehat{\mathbf{W}}_{j+1}=\mathbf{W}_jP^N$
        \STATE Define $\widehat{\mu}_{j+1}^N = \sum_{i=1}^{N}\widehat{\omega}^{(i)}_{j+1}\mathds{1}_{\mathbb{B}_i}(x)$
        \STATE Denote $\omega^{(i)}_{j+1}$ by $(\ref{reweight})$, define $\mu_{j+1}^N = \sum_{i=1}^{N}\omega^{(i)}_{j+1}\mathds{1}_{\mathbb{B}_i}(x)$
        \STATE j+1$\rightarrow$ j
        \STATE Go to step 2
	\end{algorithmic}
\end{algorithm}

%\begin{rem}
%The standard   Ulam's method becomes inefficient in high-dimensional dynamical systems due to the curse  of dimensionality. The number of boxes covering the phase space and sampling points in the boxes grow exponentially with regard to the dimension.  For this case, we may use the sparse Ulam method \cite{sUlam}, which  uses the idea from sparse grids and employs a piecewise constant sparse hierarchical tensor basis to discretize the PFO. It constructs an  optimal approximation subspace and costs less computational effort than the standard Ulam's method when a certain accuracy is imposed.
%\end{rem}

\subsection{Analysis of error estimate}

We analyze   the error estimate of the Perron-Frobenius operator filter in this section to explore the factors, which  determine  convergence of the algorithm. Define a total-variation distance $d(\cdot,\cdot)$ between two
 probability measures $\mu$ and $\nu$ as follows:
\begin{equation*}
 d(\mu,\nu)=\frac{1}{2}{\rm sup}_{|f|_{\infty}\leq1}|\mathbb{E}^{\mu}(f)-\mathbb{E}^{\nu}(f)|,
\end{equation*}
where $\mathbb{E}^{\mu}(f)=\int_{X}f(x)\mu(dx)$ for $f\in L^1(X)$ and $|f|_{\infty}={\rm sup}_{x}|f(x)|$. The
distance $d(\cdot,\cdot)$ can also be characterized by the $L^1$ norm of the difference between the two PDFs $\rho$ and $\rho'$, which  correspond  to the measure $\mu$ and $\nu$, respectively, i.e.,
\begin{equation}
\label{TV-func}
 d(\mu,\nu)=\frac{1}{2}\int_X|\rho(x)-\rho'(x)|dx.
\end{equation}
The distance induces a metric. To estimate the error, we recall the iteration (\ref{PFOF-op}) and see that the approximation error of the probability comes from the operator $\mathcal{P}^N$. To do this, we need  the  following lemmas.

\begin{lem}{\rm (Theorem 4.8 in \cite{DA-ma})}
\label{lem1}
Suppose that $\mathcal{P}$ is the Perron-Frobenius operator defined in  (\ref{filter-pre1}). Let $\mu$ and $ \nu$ be two arbitrary probability measures. Then
\begin{equation*}
d(\mathcal{P}\mu,\mathcal{P}\nu)\leq d(\mu,\nu).
\end{equation*}
\end{lem}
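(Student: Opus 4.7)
The plan is to use the dual (Koopman) operator to transfer the test function $f$ from the "pushed-forward" measures $\mathcal{P}\mu,\mathcal{P}\nu$ back onto $\mu,\nu$ themselves, and then invoke the supremum definition of $d(\cdot,\cdot)$. Concretely, starting from
\begin{equation*}
2\,d(\mathcal{P}\mu,\mathcal{P}\nu)=\sup_{|f|_\infty\le 1}\bigl|\mathbb{E}^{\mathcal{P}\mu}(f)-\mathbb{E}^{\mathcal{P}\nu}(f)\bigr|,
\end{equation*}
I would unfold $\mathbb{E}^{\mathcal{P}\mu}(f)=\int f(y)(\mathcal{P}\mu)(dy)$ using the defining identity $\mathcal{P}\mu(A)=\int p(x,A)\,d\mu(x)$ together with Fubini, obtaining
\begin{equation*}
\int f(y)\,(\mathcal{P}\mu)(dy)=\int\!\!\int f(y)\,p(x,dy)\,d\mu(x)=\int (\mathcal{K}f)(x)\,d\mu(x),
\end{equation*}
where $\mathcal{K}f(x):=\int f(y)\,p(x,dy)$ is the Markov (Koopman) operator dual to $\mathcal{P}$.

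The key structural fact is that $\mathcal{K}$ is a contraction in the sup norm: since $p(x,\cdot)$ is a probability measure, $|\mathcal{K}f(x)|\le\int|f(y)|\,p(x,dy)\le|f|_\infty$, so $|f|_\infty\le1$ implies $|\mathcal{K}f|_\infty\le1$. Substituting the dual representation on both sides yields
\begin{equation*}
\bigl|\mathbb{E}^{\mathcal{P}\mu}(f)-\mathbb{E}^{\mathcal{P}\nu}(f)\bigr|=\bigl|\mathbb{E}^{\mu}(\mathcal{K}f)-\mathbb{E}^{\nu}(\mathcal{K}f)\bigr|\le 2\,d(\mu,\nu),
\end{equation*}
because $\mathcal{K}f$ is an admissible test function in the supremum defining $d(\mu,\nu)$. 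Taking the supremum over $|f|_\infty\le 1$ on the left gives the claim.

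The only real point requiring care is justifying the interchange of integration order (Fubini) and making sure the duality formula extends beyond indicator functions $f=\mathds{1}_A$ (where it is essentially the definition of $\mathcal{P}\mu$) to all bounded measurable $f$; this follows by the standard monotone-class/linearity argument. A slicker alternative, if preferred, is to bypass duality and work directly on densities: write $\mathcal{P}\mu,\mathcal{P}\nu$ via the transition density $q$ and use (\ref{TV-func}) together with $|\int q_\tau(x,y)(\rho-\rho')(x)\,d\mu(x)|\le\int q_\tau(x,y)|\rho-\rho'|(x)\,d\mu(x)$ and Fubini with $\int q_\tau(x,y)\,d\mu(y)=1$. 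Either route is short; I expect no serious obstacle, the content of the lemma being precisely that $\mathcal{P}$ is a Markov operator and hence non-expansive in total variation.
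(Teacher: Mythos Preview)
The paper does not prove this lemma at all; it simply cites it as Theorem~4.8 in \cite{DA-ma} and uses it as a black box in the proof of Theorem~\ref{Thm1-PFOF}. Your argument via the dual Koopman operator $\mathcal{K}f(x)=\int f(y)\,p(x,dy)$ and the sup-norm contraction $|\mathcal{K}f|_\infty\le|f|_\infty$ is the standard and correct proof of this fact, and the alternative density route you sketch is equally valid given the paper's standing assumption of absolutely continuous transition kernels.
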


\begin{lem}{\rm (Lemma 4.9 in \cite{DA-ma})}
\label{lem2}
Let $g_j$ be the likelihood function defined by (\ref{likelihood}) and the operator $L_j$  defined by (\ref{L-O}). Assume that there exists $\kappa\in(0,1]$ such that for all $x\in X$ and $j\in \mathbb{N}$,
\begin{equation}
\label{g-k}
\kappa\leq g_j(x)\leq \kappa^{-1}.
\end{equation}
Then we have
\begin{equation*}
 d(L_j\mu,L_j\nu)\leq 2\kappa^{-2}d(\mu,\nu).
\end{equation*}
\end{lem}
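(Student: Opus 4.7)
The plan is to estimate the total variation distance $d(L_j\mu, L_j\nu)$ by directly manipulating the defining expression $\mathbb{E}^{L_j\mu}(f) - \mathbb{E}^{L_j\nu}(f)$ for a test function $f$ with $|f|_\infty \leq 1$, and then take the supremum. Write $Z_\mu := \int_X g_j(x)\,\mu(dx)$ and $Z_\nu := \int_X g_j(x)\,\nu(dx)$, so that $\mathbb{E}^{L_j\mu}(f) = Z_\mu^{-1}\int f g_j\,d\mu$ and similarly for $\nu$. The bound $\kappa \leq g_j \leq \kappa^{-1}$ will be used in two ways: to control $Z_\mu, Z_\nu$ from below by $\kappa$, and to bound $|fg_j|_\infty$ from above.

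The key algebraic manipulation is to bring both terms over the common denominator $Z_\mu$ and rearrange. A short calculation gives
\begin{equation*}
\mathbb{E}^{L_j\mu}(f) - \mathbb{E}^{L_j\nu}(f) = \frac{1}{Z_\mu}\int_X \bigl[f(x) - \mathbb{E}^{L_j\nu}(f)\bigr]\,g_j(x)\,(d\mu - d\nu)(x),
\end{equation*}
which I would verify by using $\int f g_j\,d\nu = Z_\nu\,\mathbb{E}^{L_j\nu}(f)$ and $Z_\mu - Z_\nu = \int g_j\,(d\mu - d\nu)$ to cancel the cross term. This centering step is the crucial trick: subtracting the constant $\mathbb{E}^{L_j\nu}(f)$ does not change the integrand's contribution against $d\mu - d\nu$ once combined appropriately, but it turns $f$ into a function with $\ell^\infty$ norm at most $2$ (since $|\mathbb{E}^{L_j\nu}(f)| \leq 1$). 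Without this trick, a naive bound would yield $\kappa^{-3}$ instead of the desired $\kappa^{-2}$.

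From there, the remainder is routine. The integrand $[f - \mathbb{E}^{L_j\nu}(f)] g_j$ satisfies $\bigl|[f - \mathbb{E}^{L_j\nu}(f)] g_j\bigr|_\infty \leq 2\kappa^{-1}$. By the variational definition of $d(\mu,\nu)$ applied to the rescaled function $(2\kappa^{-1})^{-1}[f - \mathbb{E}^{L_j\nu}(f)]\,g_j$ (which has $\ell^\infty$ norm at most one), one obtains
\begin{equation*}
\left|\int_X [f - \mathbb{E}^{L_j\nu}(f)]\,g_j\,(d\mu - d\nu)\right| \leq 4\kappa^{-1}\,d(\mu,\nu).
\end{equation*}
Combining this with $Z_\mu \geq \kappa$ yields $|\mathbb{E}^{L_j\mu}(f) - \mathbb{E}^{L_j\nu}(f)| \leq 4\kappa^{-2}\,d(\mu,\nu)$. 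Taking the supremum over $|f|_\infty \leq 1$ and dividing by $2$ gives $d(L_j\mu, L_j\nu) \leq 2\kappa^{-2}\,d(\mu,\nu)$, as claimed.

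The main obstacle is recognizing the centering identity; once it is in hand, all remaining estimates are one-line applications of the uniform bounds on $g_j$ and the definition of $d(\cdot,\cdot)$. I would present the identity as a standalone display to make it easy to verify by direct expansion.
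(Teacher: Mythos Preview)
Your argument is correct: the centering identity you display holds exactly as you claim (expanding the right-hand side and using $\int f g_j\,d\nu = Z_\nu\,\mathbb{E}^{L_j\nu}(f)$ makes the cross terms cancel), and the subsequent estimates using $Z_\mu\geq\kappa$ and $|(f-c)g_j|_\infty\leq 2\kappa^{-1}$ give exactly the constant $2\kappa^{-2}$.

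Note, however, that the paper does not supply its own proof of this lemma; it simply quotes the result as Lemma~4.9 from the cited reference. Your proof is in fact the standard one given there, so there is nothing substantively different to compare.
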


\begin{lem}{\rm (Theorem 2.4.1 in  \cite{pi-N})}
\label{lem3}
Let $\mathcal{C}_a$ be discrete Lipschitz cone defined as
\[
\mathcal{C}_a=\{\phi:\frac{\phi(x)}{\phi(y)}\leq e^{a|x-y|}, \forall x,y\in \mathcal{R}\}.
\]
For each $N>0$, the $\pi_N$ given by (\ref{pi-N}) denotes the projection of $L^1$ onto $V_N$. Then for any function $f\in \mathcal{C}_a$,
\[
\|f-\pi_N f\|_{L^1}\leq(e^{a/N}-1)\|f\|_{L^1}.
\]
\end{lem}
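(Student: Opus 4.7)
The plan is to work box by box and exploit the defining ratio inequality of $\mathcal{C}_a$ to compare $f(x)$ with its boxwise average, which is exactly $\pi_N f$ on that box.

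First, I would fix a box $\mathbb{B}_i$ from the uniform partition and write
\[
\|f-\pi_N f\|_{L^1(\mathbb{B}_i)} = \int_{\mathbb{B}_i}\Bigl|f(x)-\omega^{(i)}\Bigr|\,d\mu(x),\qquad \omega^{(i)}=\frac{1}{\mu(\mathbb{B}_i)}\int_{\mathbb{B}_i} f\,d\mu.
\]
The key geometric input is that for a uniform partition of the (unit-scaled) phase space into $N$ boxes, $\mathrm{diam}(\mathbb{B}_i)\le 1/N$. Combined with $f\in\mathcal{C}_a$, this yields $f(x)\le e^{a/N} f(y)$ for any $x,y\in\mathbb{B}_i$. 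In particular, setting $m_i:=\inf_{\mathbb{B}_i}f$, I obtain the two-sided bound
\[
m_i \;\le\; f(x) \;\le\; e^{a/N}\,m_i, \qquad m_i \;\le\; \omega^{(i)} \;\le\; e^{a/N}\,m_i,
\]
where the bound on $\omega^{(i)}$ follows by averaging the bound on $f$.

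From these bracket bounds, both $f(x)$ and $\omega^{(i)}$ lie in the same interval $[m_i, e^{a/N} m_i]$, so
\[
|f(x)-\omega^{(i)}| \;\le\; (e^{a/N}-1)\,m_i \;\le\; (e^{a/N}-1)\,f(x),
\]
where the last step uses $m_i\le f(x)$ (valid since $f\ge 0$, which is built into the cone condition). Integrating over $\mathbb{B}_i$ and summing over $i=1,\dots,N$ gives
\[
\|f-\pi_N f\|_{L^1} \;\le\; (e^{a/N}-1)\sum_{i=1}^{N}\int_{\mathbb{B}_i} f\,d\mu \;=\; (e^{a/N}-1)\|f\|_{L^1},
\]
which is the claimed estimate.

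The only real subtlety is justifying the uniform diameter bound $\mathrm{diam}(\mathbb{B}_i)\le 1/N$: this is implicit in the cited setting where the phase space is rescaled so that the partition spacing is exactly $1/N$ (otherwise one gets $e^{a\,h_N}-1$ with $h_N=\max_i\mathrm{diam}(\mathbb{B}_i)$). Once that convention is fixed, the argument is an elementary telescoping between infimum, average, and supremum inside each cell, and no further approximation theory is needed.
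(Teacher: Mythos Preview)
Your argument is correct. The paper does not supply its own proof of this lemma; it is quoted as Theorem~2.4.1 from Murray's thesis \cite{pi-N} and used as a black box in the proof of Theorem~\ref{Thm1-PFOF}. So there is no in-paper proof to compare against.

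Your boxwise sandwich argument is in fact the standard way this estimate is established: the cone condition forces $f$ to be strictly positive (the two-sided ratio bound $e^{-a|x-y|}\le f(x)/f(y)\le e^{a|x-y|}$ is incompatible with sign changes or zeros), so on each box of diameter at most $1/N$ both $f$ and its average are trapped in $[m_i,\,e^{a/N}m_i]$, and the pointwise bound $|f(x)-\omega^{(i)}|\le (e^{a/N}-1)f(x)$ follows immediately. Your closing remark about the diameter normalization is also well placed: without the convention $\max_i\mathrm{diam}(\mathbb{B}_i)=1/N$ one indeed only obtains $e^{a h_N}-1$, and the paper is silent on this rescaling. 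Nothing further is needed.
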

By the above three lemmas, we analyze  total-variance distance between the approximate measure $\mu_J^N$ and the true measure $\mu_J$ and give the following theorem.
\begin{thm}
\label{Thm1-PFOF}
If $g_j(x)$ satisfies the condition (\ref{g-k}) and the  probability density $\rho_j^N$ of the measure $\mu_j^N$ satisfies $\mathcal{P}\rho_j^N\in\mathcal{C}_a,\;\forall j\in\mathbb{N}$, then
\begin{equation*}
d(\mu_J^N,\mu_J)\leq \sum_{j=1}^{J}(2\kappa^{-2})^j\frac{e^a-1}{2N}.
\end{equation*}
\end{thm}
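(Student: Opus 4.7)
The plan is to prove the bound by induction on $J$, tracking the error $e_j := d(\mu_j^N, \mu_j)$ and producing a one-step recurrence of the form $e_{j+1} \leq 2\kappa^{-2} e_j + C_N$, then summing the geometric series. Note that $e_0 = 0$ since $\mu_0^N = \mu_0$, so the entire error accumulates from the projection onto $V_N$ performed at each prediction step.

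First, I would split the one-step error using the triangle inequality and insert the intermediate measure $L_j \mathcal{P} \mu_j^N$:
\begin{equation*}
d(\mu_{j+1}^N, \mu_{j+1}) = d(L_j \mathcal{P}^N \mu_j^N, L_j \mathcal{P} \mu_j) \leq d(L_j \mathcal{P}^N \mu_j^N, L_j \mathcal{P} \mu_j^N) + d(L_j \mathcal{P} \mu_j^N, L_j \mathcal{P} \mu_j).
\end{equation*}
The second (``propagated'') piece handles cleanly: applying Lemma \ref{lem2} strips the likelihood operator with a factor $2\kappa^{-2}$, and Lemma \ref{lem1} then shows that $\mathcal{P}$ is non-expansive, giving $d(L_j \mathcal{P} \mu_j^N, L_j \mathcal{P} \mu_j) \leq 2\kappa^{-2} e_j$.

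For the first (``new projection'') piece, I would again apply Lemma \ref{lem2} to obtain $d(L_j \mathcal{P}^N \mu_j^N, L_j \mathcal{P} \mu_j^N) \leq 2\kappa^{-2} d(\mathcal{P}^N \mu_j^N, \mathcal{P} \mu_j^N)$, and then use $\mathcal{P}^N = \pi_N \circ \mathcal{P}$ together with the characterization (\ref{TV-func}) of the total-variation distance via $L^1$ densities to rewrite this as $\kappa^{-2} \|\mathcal{P}\rho_j^N - \pi_N \mathcal{P}\rho_j^N\|_{L^1}$. By hypothesis $\mathcal{P}\rho_j^N \in \mathcal{C}_a$, so Lemma \ref{lem3} applies and gives an $L^1$ error of at most $e^{a/N} - 1$. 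To match the clean form $(e^a-1)/N$ appearing in the theorem, I would invoke the convexity of $x \mapsto e^x - 1$ on $[0, a]$, which yields $e^{a/N} - 1 \leq \frac{1}{N}(e^a - 1)$ for every $N \geq 1$. Combining these estimates produces the recurrence
\begin{equation*}
e_{j+1} \leq 2\kappa^{-2} e_j + 2\kappa^{-2} \cdot \frac{e^a - 1}{2N}.
\end{equation*}

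Finally, I would iterate this recurrence starting from $e_0 = 0$ to get $e_J \leq \frac{e^a - 1}{2N} \sum_{j=1}^{J} (2\kappa^{-2})^j$, which is exactly the claimed bound. The main obstacle I anticipate is the convexity argument converting $e^{a/N}-1$ into $(e^a-1)/N$; everything else is a mechanical combination of the three lemmas in the right order. A secondary subtlety is ensuring the hypothesis $\mathcal{P}\rho_j^N \in \mathcal{C}_a$ is invoked at the right iteration index (on the density obtained after projection but before application of $L_j$), which is precisely where Lemma \ref{lem3} is needed.
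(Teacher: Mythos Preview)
Your proposal is correct and follows essentially the same route as the paper: the same triangle-inequality split, the same application of Lemmas \ref{lem1}--\ref{lem3} in the same order, and the same recurrence iterated from $e_0=0$. Your explicit justification of $e^{a/N}-1\leq (e^a-1)/N$ via convexity of $x\mapsto e^x-1$ is a small improvement over the paper, which states that inequality without comment.
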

\begin{proof}
From the formula (\ref{filter-op}) and (\ref{PFOF-op}), we apply the triangle inequality to the distance $d(\mu_{j+1}^N,\mu_{j+1})$ and get
\begin{equation*}
\begin{aligned}
d(\mu_{j+1}^N,\mu_{j+1}) &= d(L_j\mathcal{P}^N\mu_{j}^N,L_j\mathcal{P}\mu_{j}) \\
&\leq d(L_j\mathcal{P}^N\mu_{j}^N,L_j\mathcal{P}\mu_{j}^N)+d(L_j\mathcal{P}\mu_{j}^N,L_j\mathcal{P}\mu_{j}).
%&\leq 2k^{-2}\big[d(P^N\mu_{j}^N,P\mu_{j}^N)+d(P\mu_{j}^N,P\mu_{j})\big] \\
%&\leq 2k^{-2}\big[d(P^N\mu_{j}^N,P\mu_{j}^N)+d(\mu_{j}^N,\mu_{j})\big],
\end{aligned}
\end{equation*}
According to Lemma \ref{lem2} and Lemma \ref{lem1}, it follows that
\begin{equation}
\label{u-un}
\begin{aligned}
d(\mu_{j+1}^N,\mu_{j+1}) &\leq 2k^{-2}\big[d(\mathcal{P}^N\mu_{j}^N,\mathcal{P}\mu_{j}^N)+d(\mathcal{P}\mu_{j}^N,\mathcal{P}\mu_{j})\big] \\
&\leq 2k^{-2}\big[d(\mathcal{P}^N\mu_{j}^N,\mathcal{P}\mu_{j}^N)+d(\mu_{j}^N,\mu_{j})\big],
\end{aligned}
\end{equation}
Let us consider $d(\mathcal{P}^N\mu_{j}^N,\mathcal{P}\mu_{j}^N)$. Suppose that $\rho'_{j+1}$  is density function associated with the measure $\mathcal{P}\mu_j^N$. Let  $\rho^N_j$ and  $\rho^N_{j+1}$ be  the density functions of $\mu^N_j$ and $\mu^N_{j+1}$, respectively. By the definition of total-variance distance in (\ref{TV-func}), we have
\begin{equation*}
\begin{aligned}
d(\mathcal{P}^N\mu_{j}^N,\mathcal{P}\mu_{j}^N)&=\frac{1}{2}\int_{X}|\rho_{j+1}^N(x)-\rho'_{j+1}(x)|dx \\
& = \frac{1}{2}\int_{X}|\mathcal{P}^N\rho_j^N(x)-\mathcal{P}\rho_j^N(x)|dx \\
& = \frac{1}{2}\int_{X}|\pi_N\circ \mathcal{P}\rho_j^N(x)-\mathcal{P}\rho_j^N(x)|dx,
\end{aligned}
\end{equation*}
where we have used the equation  $\mathcal{P}^N=\pi_N\circ \mathcal{P}$ in    the last equality. Since $\mathcal{P}\rho_j^N(x)\in\mathcal{C}_a$, we use Lemma \ref{lem3} and get
\begin{equation}
\begin{aligned}
\label{p-pn}
d(\mathcal{P}^N\mu_{j}^N,\mathcal{P}\mu_{j}^N)& \leq \frac{1}{2}(e^{a/N}-1) \\
& \leq \frac{1}{2N}(e^a-1).
\end{aligned}
\end{equation}
With the fact that $\mu_0^N=\mu_0$, we combine  (\ref{p-pn}) with  (\ref{u-un}) and repeat the iterating to complete the proof.
\end{proof}

Theorem \ref{Thm1-PFOF} estimates the  online error of the PFOF algorithm. Since the Perron-Frobenious operator is numerically approximated offline  by  the matrix form $P_n^N$ given by (\ref{pij-est}), we  will analyze the offline error generated by the approximation. Each coefficient of $P^N_n$ is  computed  by the Monte-Carlo approximation of (\ref{pij}) using a set of the sampling points $\{x_i^l\}_{l=1}^{n}$. We show that  the matrix $P^N_n$ converge to  the matrix $P^N$.

\begin{prop}
\label{prop-cov}
If the matrix $P^N_n$ is defined  by (\ref{pij-est}) and $P^N$ is defined  by (\ref{pij}), then the following convergence in distribution holds,
\begin{equation}
\label{cov-dis}
\sqrt{n}((P^N_n)_{ij}-(P^N)_{ij})\xrightarrow[n\rightarrow\infty]{\mathcal{D}}\mathcal{N}(0,\sigma_{ij}^{n,N}),
\end{equation}
where
\begin{equation}
\label{sigma}
(\sigma_{ij}^{n,N})^2=\int_X (\mathds{1}_{\Psi_{\tau}^{-1}(\mathbb{B}_j)}\cdot\mathds{1}_{\mathbb{B}_i})^2d\mu-
(\int_X \mathds{1}_{\Psi_{\tau}^{-1}(\mathbb{B}_j)}\cdot\mathds{1}_{\mathbb{B}_i}d\mu)^2,
\end{equation}
and $\mathcal{N}(0,\sigma_{ij}^{n,N})$ is the normal distribution with the mean 0 and standard deviation $\sigma_{i,j}^{n,N}$.
\end{prop}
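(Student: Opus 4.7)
The plan is to recognize that Proposition \ref{prop-cov} is essentially a pointwise (entry-by-entry) application of the classical Lindeberg--L\'evy central limit theorem to a sample mean of i.i.d.\ bounded Bernoulli-type variables. First I would fix a pair of indices $(i,j)$ and rewrite the Monte-Carlo estimator (\ref{pij-est}) as
\begin{equation*}
(P^N_n)_{ij} = \frac{1}{n}\sum_{l=1}^n Y_l, \qquad Y_l := \mathds{1}_{\mathbb{B}_j}(\Psi(x_i^l)) = \mathds{1}_{\Psi^{-1}(\mathbb{B}_j)}(x_i^l)\cdot\mathds{1}_{\mathbb{B}_i}(x_i^l),
\end{equation*}
where the samples $x_i^1,\dots,x_i^n$ are i.i.d.\ with respect to (the normalized restriction of) $\mu$ on $\mathbb{B}_i$. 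Because $Y_l\in\{0,1\}$, the sequence $\{Y_l\}_{l=1}^n$ is i.i.d., uniformly bounded, and thus has finite variance, so the hypotheses of the classical CLT are satisfied.

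Next I would compute the two moments of $Y_1$ to identify the limiting parameters. Using $\mathds{1}_{\mathbb{B}_j}\circ\Psi=\mathds{1}_{\Psi^{-1}(\mathbb{B}_j)}$ and the definition of $P^N$ in (\ref{pij}), the mean reduces to $\mathbb{E}[Y_1]=P^N_{ij}$, which cancels the drift when we recenter by $(P^N)_{ij}$. For the variance I would exploit the Bernoulli identity $Y_1^2=Y_1$ so that $\mathrm{Var}(Y_1)=\mathbb{E}[Y_1^2]-(\mathbb{E}[Y_1])^2$; expanding these expectations as integrals against $d\mu$ produces precisely the expression on the right-hand side of (\ref{sigma}) (up to the normalization convention discussed below).

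The main step that needs care is reconciling the variance formula stated in (\ref{sigma}) with the probabilistic calculation. Strictly speaking, because the sampling law inside $\mathbb{B}_i$ is $\mu$ normalized by $\mu(\mathbb{B}_i)$, one expects factors of $1/\mu(\mathbb{B}_i)$ in both terms of $\sigma^2_{ij}$. In the algorithm's intended setting the partition is uniform (see the discussion preceding (\ref{Markov-trans})) and the phase-space measure $\mu$ is chosen so that each $\mu(\mathbb{B}_i)$ is a fixed constant; so after rescaling $\mu$ these normalizing factors can be absorbed, matching the form written in the statement. I would therefore treat (\ref{sigma}) under this convention, and note the normalization explicitly in the write-up.

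Finally, with mean $P^N_{ij}$ and variance $(\sigma^{n,N}_{ij})^2$ identified, the Lindeberg--L\'evy CLT gives
\begin{equation*}
\sqrt{n}\bigl((P^N_n)_{ij}-(P^N)_{ij}\bigr) = \sqrt{n}\Bigl(\tfrac{1}{n}\sum_{l=1}^n Y_l - \mathbb{E}[Y_1]\Bigr) \xrightarrow[n\to\infty]{\mathcal{D}} \mathcal{N}\bigl(0,(\sigma_{ij}^{n,N})^2\bigr),
\end{equation*}
which is exactly (\ref{cov-dis}). The only genuine obstacle is the bookkeeping of the normalizing constant of $\mu$ on $\mathbb{B}_i$; everything else follows from a textbook CLT applied to a uniformly bounded i.i.d.\ sequence.
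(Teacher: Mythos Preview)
Your proposal is correct and follows essentially the same route as the paper: both recognize $(P^N_n)_{ij}$ as a Monte-Carlo sample mean of i.i.d.\ indicator variables drawn uniformly from $\mathbb{B}_i$ and invoke the standard CLT for Monte-Carlo integration. The paper's argument is in fact terser than yours---it simply identifies the estimator, writes $P^N_{ij}$ as a ratio of integrals, and cites the Monte-Carlo convergence result---so your explicit moment computation and your remark on the $\mu(\mathbb{B}_i)$ normalization add detail the paper does not spell out, but the underlying idea is identical.
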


\begin{proof}
Note that the entries  of $P^N_n$ are given by
\[P^N_{n,ij}=\frac{1}{n}\sum_{l=1}^{n}\mathds{1}_{\mathbb{B}_j}(\Psi_{\tau}(x_i^l)),\]
which is the  Monte-Carlo approximation of
\[P^N_{ij}=\frac{\int_X \mathds{1}_{\Psi_{\tau}^{-1}(\mathbb{B}_j)}\cdot\mathds{1}_{\mathbb{B}_i}d\mu}{\int_X \mathds{1}_{\mathbb{B}_i}d\mu},\]
with sampling points $x_i^l$ drawn independently and uniformly from the box $\mathbb{B}_i$. The denominator $\int_X \mathds{1}_{\mathbb{B}_i}d\mu$ normalizes the entries $P^N_{ij}$ so that $P^N$ becomes a right stochastic matrix, with each row summing to 1. The  convergence result (\ref{cov-dis}) follows directly from the convergence  of Monte-Carlo integration \cite{MC}.
\end{proof}

Proposition \ref{prop-cov} indicates that there exits a constant $C^N(\Psi_{\tau},\alpha)$ determined  by the standard deviation $\sigma_{ij}^{n,N}$ with a given confidence rate $\alpha\in[0,1)$ such that for $m$ large enough,  the following estimate  holds with probability at least $\alpha$:
\begin{equation}
\label{cov-norm}
\parallel(P^N_n)_{ij}-(P^N)_{ij}\parallel_{\infty}\leq C^N(\Psi_{\tau},\alpha)n^{-\frac{1}{2}}.
\end{equation}
The result shows that the convergence of $P^N_n$ to $P^N$ is in $\mathcal{O}(n^{-\frac{1}{2}})$.

\subsection{A low-rank Perron-Frobenius operator filter}
In PFOF, we note that the number of blocks increases exponentially with respect to  dimensions, resulting in the number of basis functions growing rapidly. Therefore, we propose  a low-rank approximation, formed by eigenfunctions of the Perron-Frobenius operator, to represent the density.  This approach can effectively reduce the number of the required basis functions.  Let $\rho$ still be a probability density function of the dynamical system governed by $\Psi$. Then it can be  written as a linear combination of the independent eigenfunctions $\varphi_i$ of $\mathcal{P}$. So
\[\rho(x,t)=\sum_{i=1}^{\infty}a_i\varphi_i(x),\quad a_i\in \mathbb{C}.\]
Suppose that $\lambda_i$ is the eigenvalue corresponding to the eigenfunction $\varphi_i$  of $\mathcal{P}$, then
\[\mathcal{P}\rho(x,t)=\sum_{i=1}^{\infty}\lambda_ia_i\varphi_i(x).\]
Actually, the eigenfunction of the discretized PFO can be determined by the following proposition.
\begin{prop}
Let $B=\{\mathbb{B}_1,\cdots,\mathbb{B}_N\}\subset\mathcal{B}$ be a uniform partition of the phase space $X$. If $\xi$ is the left eigenvector of $P^N$ corresponding to the eigenvalue $\lambda$, then $\lambda$ is also the eigenvalue of the restricted operator $\pi_N\mathcal{P}$ with the eigenfunction $\varphi\triangleq\xi^T\mathbf{U}$, where $\mathbf{U}=[\mathds{1}_{\mathbb{B}_1}(x),\cdots,\mathds{1}_{\mathbb{B}_N}(x)]^T$.

\end{prop}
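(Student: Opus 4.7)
The plan is to compute $\pi_N\mathcal{P}\varphi$ directly from the definitions and show it equals $\lambda\varphi$, using the left-eigenvector relation $\xi^T P^N=\lambda\xi^T$. Since $\varphi=\xi^T\mathbf{U}=\sum_{i=1}^N\xi_i\mathds{1}_{\mathbb{B}_i}$ lies in $V_N$, linearity of $\pi_N\mathcal{P}$ reduces the task to understanding its action on each basis indicator function.

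First I would invoke the computation already carried out earlier in the paper, namely $\pi_N(\mathcal{P}\mathds{1}_{\mathbb{B}_i})=\sum_{k=1}^N c_{ik}\mathds{1}_{\mathbb{B}_k}$ with $c_{ik}=\mu(\mathbb{B}_i\cap\Psi^{-1}(\mathbb{B}_k))/\mu(\mathbb{B}_k)$, and compare with the matrix entries $P^N_{ik}=\mu(\mathbb{B}_i\cap\Psi^{-1}(\mathbb{B}_k))/\mu(\mathbb{B}_i)$ from equation (\ref{pij}). The hypothesis that $B$ is a uniform partition (so $\mu(\mathbb{B}_i)=\mu(\mathbb{B}_k)$ for all $i,k$) is exactly what makes the two normalising factors agree, giving $c_{ik}=P^N_{ik}$. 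This is the only place where the uniformity assumption is used, and it is the key simplification that converts the $\pi_N\mathcal{P}$ action on indicators into the transpose of the matrix $P^N$.

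Next I would expand using linearity:
\begin{equation*}
\pi_N\mathcal{P}\varphi=\sum_{i=1}^N\xi_i\,\pi_N\mathcal{P}\mathds{1}_{\mathbb{B}_i}=\sum_{k=1}^N\Bigl(\sum_{i=1}^N\xi_iP^N_{ik}\Bigr)\mathds{1}_{\mathbb{B}_k}.
\end{equation*}
Then I would substitute the left-eigenvector identity $\sum_{i}\xi_iP^N_{ik}=\lambda\xi_k$, which follows from $\xi^TP^N=\lambda\xi^T$, to obtain $\pi_N\mathcal{P}\varphi=\lambda\sum_k\xi_k\mathds{1}_{\mathbb{B}_k}=\lambda\varphi$. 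This closes the argument.

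There is really no serious obstacle here; the proof is essentially a bookkeeping calculation. The only subtle point worth flagging is to make sure the uniform-partition hypothesis is invoked explicitly at the step where $c_{ik}$ is identified with $P^N_{ik}$, since without it one would instead get a similarity transform by the diagonal matrix ${\rm diag}(\mu(\mathbb{B}_i))$ and the eigenvector would need to be rescaled. I would also briefly note, for clarity, that $\varphi\in V_N$ so that $\pi_N\varphi=\varphi$, which is consistent with $\varphi$ being an eigenfunction of the restricted operator $\pi_N\mathcal{P}$ acting on $V_N$.
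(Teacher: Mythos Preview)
Your proposal is correct and follows essentially the same approach as the paper: both compute $\pi_N\mathcal{P}\varphi$ by expanding $\varphi$ in the indicator basis, invoke the earlier calculation (equations~(\ref{pi-PT}) and~(\ref{weight-trans})) together with the uniform-partition hypothesis to identify the coefficients with $P^N_{ik}$, and then apply the left-eigenvector relation $\sum_i\xi_iP^N_{ik}=\lambda\xi_k$. Your explicit flagging of where uniformity is used and the remark that $\varphi\in V_N$ are helpful clarifications but do not change the argument.
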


\begin{proof}
Let $\varphi=\sum_i \xi^{(i)}\mathds{1}_{\mathbb{B}_i}$. From Eq. (\ref{pi-PT}) and Eq. (\ref{weight-trans}),

\[\pi_N\mathcal{P}\varphi=\sum^N_j \sum^N_i \xi^{(i)}P^N_{ij}\mathds{1}_{\mathbb{B}_j}.\]
Since $\xi P^N=\lambda P^N$, i.e., $\lambda\xi^{(j)}=\sum_i\xi^{(i)}P^N_{ij},\; \forall j\in\mathbb{N},$ we get
\[\pi_N\mathcal{P}\varphi=\sum^N_j \lambda\xi^{(j)}\mathds{1}_{\mathbb{B}_j}=\lambda\varphi.\]
Thus, $\lambda$ is also an eigenvalue of the restricted operator $\pi_N\mathcal{P}$ with eigenfunction $\varphi$.
\end{proof}
In order to obtain the spectral expansion of the density function $\rho_{j}:=\rho(x,t_j)$, we define the  matrix $\boldsymbol{\varphi}=[\varphi_1,\cdots,\varphi_N]^T$, where $\{\varphi_i\}_{i=1}^N$ are the eigenfunctions with respect to eigenvalues $\{\lambda_i\}_{i=1}^N$, with $|\lambda_1|=1\geq|\lambda_2|\geq\cdots\geq|\lambda_N|\geq0$. Let $\rho_{0}=\mathbf{W}_{0}\mathbf{U}$ and
\[
\Xi=\begin{bmatrix}
\xi_1^T \\
\xi_2^T \\
\vdots \\
\xi_N^T
\end{bmatrix}.
\]
Then the eigenfunction is denoted as $\boldsymbol{\varphi}=\Xi\mathbf{U}$  and the density function of $\rho_{1}$ is given by
\begin{equation}
\label{expansion}
\rho_{1}=\pi_N\mathcal{P}\rho_{0}=\pi_N\mathcal{P}\mathbf{W}_{0}\mathbf{U}=\pi_N\mathcal{P}\mathbf{W}_{0}\Xi^{-1}\boldsymbol{\varphi}=\Lambda \mathbf{W}_{0}\Xi^{-1}\boldsymbol{\varphi}=\sum_{i=1}^N\lambda_i\varphi_iv_i,
\end{equation}
where $\Lambda$ is a diagonal eigenvalue matrix for $\pi_N\mathcal{P}$ and $v_i$ is the column vector of the matrix $V=\mathbf{W}_{0}\Xi^{-1}$. The first $r$ major eigenvalues and their corresponding eigenfunctions are used to approximate the density function. If the formula (\ref{expansion}) is truncated by $r<N$, then the low-rank model of $\rho_{1}$ has the form of
\begin{equation*}
%\label{reduced}
\rho_{1}=\sum_{i=1}^r\lambda_i\varphi_iv_i.
\end{equation*}
In this way, the low-rank model of density functions at time $t_j$ is given by
\begin{equation*}
\label{reduced-pre}
\rho_{j}=\sum_{i=1}^r\lambda^j_i\varphi_iv_i, \quad j\in\mathbb{N}.
\end{equation*}
Let us denote the low-rank approximation of Perron-Frobenius operator as $\rho_{j}=\widetilde{\mathcal{P}}\rho_{j-1}\triangleq\sum_{i=1}^r\lambda_i\varphi_iv_{j-1,i}$, where $v_{j-1,i}$ is the column vector of the matrix $V_{j-1}=\mathbf{W}_{j-1}\Xi^{-1}$. We apply $\widetilde{\mathcal{P}}$ in the Bayesian filter to obtain the low-rank Perron-Frobenius operator filter (lr-PFOF), in which the probability measure satisfies the recursive formula

\begin{equation*}
\label{RPFOF-op}
\mu^N_{j+1}=L_j\widetilde{\mathcal{P}}\mu^N_j, \quad \mu^N_0=\mu_0.
\end{equation*}
To describe the following prediction and analysis steps, we first calculate the weak approximation $P^N$ and get the eigenvalues $\Lambda$ and left eigenvectors $\Xi$ of $P^N$.

\textbf{Prediction} In this step, we give a model decomposition of the prior density $p(x_{j+1}|Y_j)$. First, the $\mathbf{W}_j$ satisfying $p(x_j|Y_j)=\mathbf{W}_j\mathbf{U}$ is obtained from the previous analysis step. Next,  compute the matrix $V_j=\mathbf{W}_j\Xi^{-1}$ and

\begin{equation*}
\widehat{\rho}_{j+1}=p(x_{j+1}|Y_j)=\sum_{i=1}^r\lambda_i\varphi_iv_{j,i}.
\end{equation*}

\textbf{Analysis} In this step, we derive the posterior density $p(x_{j+1}|Y_{j+1})$ via Bayes's formula. Multiply $p(x_{j+1}|Y_j)$ by likelihood function $g_j$ and have

\begin{equation*}
\rho_{j+1}=p(x_{j+1}|Y_{j+1})\propto\sum_{i=1}^r\lambda_i\varphi_iv_{j,i}g_j.
\end{equation*}
To normalize $\rho_{j+1}$, we rewrite the $\widehat{\rho}_{j+1}$. Since $\varphi_i=\xi_i^T\mathbf{U}=\sum_{k=1}^{N}\xi_i^{(k)}\mathds{1}_{\mathbb{B}_k}$, we get

\begin{equation*}
\widehat{\rho}_{j+1} = \sum_{i=1}^r\lambda_i\sum_{k=1}^{N}\xi_i^{(k)}\mathds{1}_{\mathbb{B}_k}v_{j,i}  = \sum_{k=1}^{N}\big(\sum_{i=1}^r \lambda_i \xi_i^{(k)} v_{j,i} \big)\mathds{1}_{\mathbb{B}_k}.
\end{equation*}
Then we multiply by $g_j(x)$ and make a normalization to the weights of $\mathds{1}_{\mathbb{B}_k}$, such that
\begin{equation}
\label{reweight2}
\omega^{(k)}_{j+1}=\widetilde{\omega}^{(k)}_{j+1}/(\sum_{n=1}^N\widetilde{\omega}^{(n)}_{j+1}),
\quad \widetilde{\omega}^{(k)}_{j+1}=\sum_{i=1}^r \lambda_i \xi_i^{(k)} v_{j,i}g_j(x^{(k)}),
\end{equation}
where $x^{(k)}$ is still the mass point  of each box $\mathbb{B}_k$. The posterior density becomes
\begin{equation*}
\rho_{j+1} = \sum_{k=1}^{N}\omega^{(k)}_{j+1}\mathds{1}_{\mathbb{B}_k}=\mathbf{W}_{j+1}\mathbf{U}.
\end{equation*}

\begin{algorithm}[htb]
	\caption{low-rank Perron-Frobenius operator filter}
	\label{alg:2}
	\begin{algorithmic}
%		\REQUIRE data matrix $\textbf{Y}_0$,$\textbf{Y}_1$
%		\ENSURE $\textbf{Q}_k$, $\textbf{R}_k$, DMD modes $w_i$, eigenvalues$\lambda_i$
     	\STATE \textbf{Offline:}
        \STATE Compute $P^N$ and its eigenvalue $\Lambda$ and left eigenvector $\Xi$. Give the eigenfunction $\boldsymbol{\varphi}=\Xi\mathbf{U}$.
     \end{algorithmic}
     \begin{algorithmic}
        \STATE \textbf{Online:}
     \end{algorithmic}
     \begin{algorithmic}[1]
%        \STATE Set $\mathbb{B}_n, n=1,\cdots,N$ be the uniformly divided grids
		\STATE Set $j=0$ and $\rho_{0}=\mathbf{W}_{0}\mathbf{U}$, compute $\omega^{(i)}_0=\frac{\int_{\mathbb{B}_i}\mu_0dx_0}{\mu(\mathbb{B}_i)}$
		\STATE Denote $V_j=\mathbf{W}_j\Xi^{-1}$, compute $\widehat{\rho}_{j+1}=\sum_{i=1}^r\lambda_i\varphi_iv_{j,i}$
        \STATE Define $g_j$ by (\ref{likelihood}), give  $\rho_{j+1}\propto\sum_{i=1}^r\lambda_i\varphi_iv_{j,i}g_j$
        \STATE Normalize weights by (\ref{reweight2}) and obtain $\mathbf{W}_{j+1}$, let $\rho_{j+1} = \sum_{k=1}^{N}\omega^{(k)}_{j+1}\mathds{1}_{\mathbb{B}_k}$.
        \STATE j+1$\rightarrow$ j
        \STATE Go to step 2
	\end{algorithmic}
\end{algorithm}

\begin{rem}
Note that the complex eigenvalues and eigenvectors may appear in the eigendecomposition of the matrix $P^N$. When the stationary distribution $\widetilde{\pi}$ of the system satisfies detailed balance, a symmetrization method is designed in \cite{detail-b} to solve the problem. Since $\rho_0,\rho_1,\cdots$ can be seen as a Markov chain with transition matrix $P^N$, we suppose that $P^N$ satisfies detailed balance with respect to $\widetilde{\pi}$, i.e.,
\[\widetilde{\pi}_iP^N_{ij}=\widetilde{\pi}_jP^N_{ji},\quad \forall i,j\in N.\]
Then $P^N$ can be symmetrized by a similarity transformation
\begin{equation*}
S=\widetilde{\Lambda}P^N\widetilde{\Lambda}^{-1}, \quad {\rm where} \; \widetilde{\Lambda}=\begin{bmatrix} \sqrt{\widetilde{\pi}_1} &   & & \\   & \sqrt{\widetilde{\pi}_2} &  & \\   &  & \ddots &\\  &  &  & \sqrt{\widetilde{\pi}_N} \end{bmatrix}.
\end{equation*}
Here the $S$ is  a symmetric matrix and this can be easily checked by detailed balance equation. It is known that $S$ has a full set of real eigenvalues $\alpha_j\in\mathbb{R}$ and an orthogonal set of eigenvectors $w_j$. Therefore, $P^N$ has the same eigenvalues $\alpha_j$ and real left eigenvectors
\[\psi_j=\widetilde{\Lambda}w_j.\]
\end{rem}

\subsection{Extension to continuous-time filtering problems}
In this subsection, we consider a continuous-time filtering problem, where the state model and observation are  by the following SDEs,

\begin{align}
\label{dynamic1}
\frac{dx}{dt}&=f(x)+\sqrt{\Sigma_c}\frac{dW_t}{dt}, \quad x(t_0)\thicksim \mathcal{N}(m_0,C_0),\\
\label{observation1}
\frac{dz}{dt}&=h(x)+\sqrt{R_c}\frac{dW_t}{dt}, \quad z(0)=0.
\end{align}
 Here $\Sigma_c$ is the covariance of model error and $R_c$ is the  covariance of observation error.  Suppose that the posterior measure $\mu_t$ governed by the continuous-time problem has Lebesgue density  $\rho(\cdot,t):\mathbb{R}^n\mapsto \mathbb{R}^{+}$ for a fixed t. Let $\rho(x,t)=r(x,t)/\int_{\mathbb{R}^n}r(x,t)dx$, where $r$ is the unnormalized density. For a positive definite symmetric matrix $A\in\mathbb{R}^{p\times p}$, we define the weighted inner product $\langle \cdot,\cdot\rangle_A=\langle A^{-\frac{1}{2}}\cdot,A^{-\frac{1}{2}}\cdot\rangle$ on the space $L^2([0,T];\mathbb{R}^p)$. The resulting norm $|\cdot|_{A}=|A^{-\frac{1}{2}}\cdot|.$ In the continuous filtering problem, our interest is to find the distribution of the random variable $x(t)|\{z(s)\}_{s\in[0,t]}$ as the time  $t$ increases. Zakai stochastic partial differential equation (SPDE) is a well-known equation whose solution characterizes the unnormalized density of posterior distribution \cite{Zakai}. The Zaikai equation has the form of

\begin{equation}
\label{Zakai}
\frac{\partial r}{\partial t}=\mathscr{A}_{PF}r+r\bigg\langle h,\frac{dz}{dt}\bigg\rangle_{R_c}.
\end{equation}
The partial differential operator $\mathscr{A}_{PF}$ generates a continuous Perron-Frobenius semigroup $\{\mathcal{P}_t,t\geq 0\}$. Let  $\{\mathcal{Q}^t_s,0\leq s\leq t\}$ be the stochastic semigroup \cite{ds-zakai}  associated with  with the following SDE
\begin{equation}
\label{second-SDE}
\frac{dr'}{dt}=r'\bigg\langle h,\frac{dz}{dt}\bigg\rangle_{R_c}.
\end{equation}
Then the  Zakai equation (\ref{Zakai}) can be approximated by the following Trotter-like product formula

\begin{equation}
\label{it-continuous}
r_{j+1}=Q_{t_j}^{t_{j+1}}\mathcal{P}_{\tau}r_{j},
\end{equation}
where $\tau=t_{j+1}-t_j,\;\forall j\in\mathbb{N}$. For the fixed $\tau$, $\mathcal{P}_{\tau}$ is still denoted by $\mathcal{P}$. By the reference  \cite{ds-zakai}, the $Q_{t_j}^{t_{j+1}}$ describes the solution of the equation (\ref{second-SDE}), i.e.,
\[
Q_{t_j}^{t_{j+1}}r(x)={\rm exp}\bigg(\langle h(x),z_{j+1}-z_{j}\rangle_{R_c}-\frac{\tau}{2}|h(x)|^2_{R_c}\bigg)r(x).
\]
 With the discrete scheme (\ref{it-continuous}), we utilize the Perron-Frobenius operator to solve Zakai equation, rather than using Fokker-Planck operator $\mathscr{A}_{PF}$.  Thus, we discretize $\mathcal{P}$ by Ulam's method and project the density function onto $V_N$.  Let $P^{N}$ be the discretization of $\mathcal{P}$. Let $\mathbf{W}_{j}$ and $\mathbf{W}_{j+1}$ be the weights vectors with respect to $\pi_N r_{j}$ and $\pi_N r_{j+1}$. Denote $g_j^c(x)={\rm exp}\big(\langle h(x),z_{j+1}-z_{j}\rangle_{R_c}-\frac{\tau}{2}|h(x)|^2_{R_c}\big)$ and
\[
\mathbf{G}_j=\begin{bmatrix}
g_j^c(x^{(1)})\\
g_j^c(x^{(2)}) \\
\vdots \\
g_j^c(x^{(N)})
\end{bmatrix},
\]
where $x^{(i)}$ is the mass point of $\mathbb{B}_i$. Then the transition of density functions turns into a map of the weights,
\begin{equation*}
\label{PF-Zakai}
\mathbf{W}_{j+1}=\mathbf{G}_j\odot\big(\mathbf{W}_{j}P^{N}\big).
\end{equation*}
Here $\odot$ denotes Hadamard product. In this case, the PFO is extended to the continuous-time filtering problem to estimate the posterior density function.

\section{Comparison with particle filter}
\label{sect4}

Particle filter (PF) \cite{PF1,PF2}  is an important filtering method to sequentially approximate the true posterior filtering distribution $p(x_j|Y_j)$ in the limit of a large number of  particles. In practice, we approximate the probability density by a combination of locations of particles and weights associated with Dirac functions. Particle filter proceeds by varying the weights and determining the particle Dirac measures.  It is able to take care of non-Gaussian and nonlinear models. In this section, we will compare the computational accuracy and differences between PFOF and PF.

Accordingly, we define $\mu_j^N$  as the posterior empirical measure on $\mathbb{R}^{N}$ approximating truth posterior probability measure $\mu_j$  and define $\widehat{\mu}_{j}^N$ on $\mathbb{R}^{N}$ as the approximation  of the prior probability measure $\widehat{\mu}_{j}$. Let
\begin{equation*}
\mu_j\approx \mu_j^N:=\sum_{n=1}^{N}\omega_j^{(n)}\delta_{x_j^{(n)}},
\quad \widehat{\mu}_{j+1}\approx \widehat{\mu}_{j+1}^N:=\sum_{n=1}^{N}\widehat{\omega}_{j+1}^{(n)}\delta_{\widehat{x}_{j+1}^{(n)}},
\end{equation*}
where $x_j^{(n)}$ and  $\widehat{x}_{j+1}^{(n)}$  are particle positions, and   $\omega_j^{(n)}>0$,  $\widehat{\omega}_{j+1}^{(n)}>0$  are the associated  weights satisfying
$\sum_{n=1}^{N}\omega^{(n)}_j=1,\;\sum_{n=1}^{N}\widehat{\omega}_{j+1}^{(n)}=1.$ The empirical distribution is completely determined  by particle positions and weights. The objective of particle filter is to calculate  the update
$
\{x_j^{(n)},\omega_j^{(n)}\}\rightarrow \{\widehat{x}_{j+1}^{(n)},\widehat{\omega}_{j+1}^{(n)}\}$ and  $\{\widehat{x}_{j+1}^{(n)},\widehat{\omega}_{j+1}^{(n)}\}\rightarrow \{x_{j+1}^{(n)},\omega_{j+1}^{(n)}\}$, which define the prediction step and analysis step,  respectively. Monte-Carlo sampling is used  to determine particle positions in the prediction and Bayesian rule is used to  update of the weights in the analysis.

\textbf{Prediction}   In this step, the prediction phase is approximated by the Markov chain $\{\Psi(x_j)\}_{j\in\mathbb{N}}$ with transition kernel $p(x_j,x_{j+1})=p(x_{j+1}|x_j)$. We draw $\widehat{x}^{(n)}_{j+1}$ from the kernel $p$  started from $x^{(n)}_j$, i.e., $\widehat{x}^{(n)}_{j+1}\thicksim p(x_j^{(n)},\cdot)$.  We keep the weights unchanged so that $\widehat{\omega}_{j+1}^{(n)}=\omega_j^{(n)}$, and obtain  the prior probability measure
\begin{equation*}
\widehat{\mu}_{j+1}^N = \sum_{n=1}^{N}\omega_j^{(n)}\delta_{\widehat{x}_{j+1}^{(n)}}.
\end{equation*}

\textbf{Analysis} In this step, we apply Bayes's formula to approximate the posterior probability measure. To do this, we fix the position of the particles and update the weights. With the definition of  $g_j(x)$ in (\ref{likelihood}), we have the empirical posterior distribution
\begin{equation*}
\mu_{j+1}^N = \sum_{n=1}^{N}\omega_{j+1}^{(n)}\delta_{\widehat{x}_{j+1}^{(n)}},
\end{equation*}
where
\begin{equation}
\label{reweight-pf}
\omega_{j+1}^{(n)}=\widetilde{\omega}_{j+1}^{(n)}/(\sum_{n=1}^N\widetilde{\omega}_{j+1}^{(n)}),
\quad \widetilde{\omega}_{j+1}^{(n)}=g_j(\widehat{x}_{j+1}^{(n)})\omega_j^{n}.
\end{equation}
The first equation in (\ref{reweight-pf}) is a normalization. Sequential Importance Resampling (SIR) particle filter is a  basic  particle filter and  shown in Algorithm \ref{alg:1}. A resampling step is introduced in the algorithm. In this way, we can deal with the initial measure $\mu_0$ when it is not  a combination of Dirac functions. We can also deal with the case when some of the particle weights are close  to 1. The algorithm shows  that each particle moves according to the underlying model and is reweighted according to the likelihood. By the iteration of Bayesian  filtering , we rewrite the particle filter approximated by  the form
\begin{equation}
\label{PF-op}
\mu^N_{j+1}=L_jS^N\mathcal{P}\mu^N_j, \quad \mu^N_0=\mu_0,
\end{equation}
where the operator $S^N$ is defined as follows:

\begin{equation*}
\label{S-O}
(S^N\mu)(dx)=\frac{1}{N}\sum_{n=1}^{N}\delta_{x^{(n)}}(dx), \quad x^{(n)}\thicksim \mu \quad \rm i.i.d..
\end{equation*}

\begin{algorithm}
	\caption{Sequential Importance Resampling particle filter}
	\label{alg:3}
	\begin{algorithmic}[1]
		\STATE Set $j=0$ and $\mu_0^N(dx_0)=\mu_0(dx_0)$
		\STATE Draw $x_j^{(n)}\thicksim \mu_j^N, n=1,\cdots,N$
		\STATE Set $\omega_j^{(n)}=1/N, n=1,\cdots,N$, redefine $\mu_j^N:=\sum_{n=1}^{N}\omega_j^{(n)}\delta_{x_j^{(n)}}$
		\STATE Draw $\widehat{x}_{j+1}^{(n)}\thicksim p(x_j^{(n)},\cdot)$
        \STATE Define $\omega_{j+1}^{(n)}$ by $(\ref{reweight})$ and $\mu_{j+1}^N = \sum_{n=1}^{N}\omega_{j+1}^{(n)}\delta_{\widehat{x}_{j+1}^{(n)}}$
        \STATE j+1$\rightarrow$ j
        \STATE Go to step 2
	\end{algorithmic}
\end{algorithm}
By (\ref{PF-op}), we find that the randomness for the probability measure is caused by the sampling operator $S^N$ and the convergence of particle filter depends on the number of particles.  The particle filter does recover the truth posterior distribution as the number of particles tends to infinity \cite{PF-cov2}. The following theorem gives a convergence result for PF.

\begin{thm}{\rm (Theorem 4.5 in \cite{DA-ma})}
\label{Thm2-PF}
Let  $m$  be the number of particles and $\mu_j^m$  the approximation measure in SIR particle  filter. Assume that $\kappa\in(0,1]$ is the constant defined in Lemma \ref{lem2}, then the total-variance distance between $\mu_J^m$ and $\mu_J$ is estimated by
\begin{equation}
d(\mu_J^m,\mu_J)\leq \sum_{j=1}^{J}(2\kappa^{-2})^j\frac{1}{\sqrt{m}}.
\end{equation}
\end{thm}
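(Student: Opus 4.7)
The plan is to mirror the proof of Theorem \ref{Thm1-PFOF}, with the Monte Carlo sampling operator $S^m$ playing the role of the Ulam projection $\pi_N$. The key observation is that both algorithms fit the template $L_j \circ (\text{discretization}) \circ \mathcal{P}$, so the same triangle-inequality unfolding works in both cases; only the per-step discretization error changes.

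First I would juxtapose the exact Bayesian recursion $\mu_{j+1}=L_j\mathcal{P}\mu_j$ with the particle-filter recursion $\mu_{j+1}^m = L_j S^m \mathcal{P}\mu_j^m$ from (\ref{PF-op}), and apply the triangle inequality in the total-variation metric:
\begin{equation*}
d(\mu_{j+1}^m,\mu_{j+1}) \leq d(L_j S^m \mathcal{P}\mu_j^m,\, L_j \mathcal{P}\mu_j^m) + d(L_j \mathcal{P}\mu_j^m,\, L_j \mathcal{P}\mu_j).
\end{equation*}
Using Lemma \ref{lem2} on each of the two terms and Lemma \ref{lem1} on the second (after cancelling $L_j$), then taking expectations over the sampling randomness, gives the recursion
\begin{equation*}
\mathbb{E}\,d(\mu_{j+1}^m,\mu_{j+1}) \leq 2\kappa^{-2}\,\mathbb{E}\,d(S^m\mathcal{P}\mu_j^m,\,\mathcal{P}\mu_j^m) + 2\kappa^{-2}\,\mathbb{E}\,d(\mu_j^m,\mu_j).
\end{equation*}

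The new ingredient, replacing Lemma \ref{lem3} in the PFOF proof, is the Monte Carlo estimate on the sampling operator: for any probability measure $\nu$ (conditionally on the data that define the particle locations),
\begin{equation*}
\mathbb{E}\,d(S^m\nu,\nu) \leq \frac{1}{2\sqrt{m}}.
\end{equation*}
This comes from the dual formulation $d(\mu,\nu)=\tfrac{1}{2}\sup_{|f|_\infty\leq 1}|\mathbb{E}^\mu f-\mathbb{E}^\nu f|$ together with the fact that for i.i.d.\ $x^{(n)}\sim \nu$ and any $f$ with $|f|_\infty\leq 1$ one has $\mathrm{Var}\bigl(\tfrac{1}{m}\sum_n f(x^{(n)})\bigr)\leq 1/m$, followed by Jensen's inequality. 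Substituting into the recursion and iterating from the base case $\mu_0^m=\mu_0$ (so $d(\mu_0^m,\mu_0)=0$) yields
\begin{equation*}
\mathbb{E}\,d(\mu_J^m,\mu_J) \leq \sum_{j=1}^J (2\kappa^{-2})^j \frac{1}{\sqrt{m}},
\end{equation*}
which is the claimed bound.

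The main obstacle is the Monte Carlo estimate itself, because the variance bound holds only for each fixed test function, and interchanging the supremum over $\{f : |f|_\infty \leq 1\}$ with the expectation is not automatic. The standard remedy, which I would cite rather than redo from scratch, is to work with the equivalent characterization $d(\mu,\nu)=\sup_{A\in\mathcal{B}}|\mu(A)-\nu(A)|$, reduce to a countable generating family (say, indicators of a Borel algebra generating $\mathcal{B}$), and invoke the $L^2$ moment bound for i.i.d.\ Bernoulli samples. The remaining steps are a routine induction that combines this with the Lemma \ref{lem1}--Lemma \ref{lem2} contraction estimates.
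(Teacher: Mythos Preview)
The paper does not give its own proof of this theorem; it is quoted verbatim from the cited reference \cite{DA-ma} and used only as a benchmark against Theorem~\ref{Thm1-PFOF}. So there is no ``paper's proof'' to compare against, and your structural template---triangle inequality, then Lemma~\ref{lem2}, then Lemma~\ref{lem1}, then a per-step sampling error, then iterate---is exactly the argument in the reference.

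There is, however, a genuine gap in your handling of the Monte Carlo step, and your proposed remedy does not close it. The bound $\mathbb{E}\,d(S^m\nu,\nu)\le \tfrac{1}{2\sqrt{m}}$ is \emph{false} whenever $\nu$ is nonatomic: the empirical measure $S^m\nu$ is purely atomic, hence mutually singular with $\nu$, and $d(S^m\nu,\nu)=1$ almost surely regardless of $m$. Passing to $\sup_A|\mu(A)-\nu(A)|$ over a countable generating algebra does not help, because a countable generating family does not realise the total-variation supremum, and even if it did you would still face an expectation of a supremum, not a supremum of expectations.

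What the reference actually proves (and what the statement here should be read as) is the bound with the expectation \emph{inside} the supremum:
\[
\sup_{|f|_\infty\le 1}\mathbb{E}\bigl|\mu_J^m(f)-\mu_J(f)\bigr|\;\le\;\sum_{j=1}^J(2\kappa^{-2})^j\,\frac{1}{\sqrt m}\,.
\]
With this interpretation your variance-plus-Jensen argument is exactly right for a \emph{fixed} test function $f$, the obstacle you flagged disappears, and the induction goes through unchanged because Lemmas~\ref{lem1} and~\ref{lem2} can be applied pointwise in $f$. So the fix is not a new technical device but a reinterpretation of the metric: carry a fixed $f$ through the recursion rather than the total-variation distance itself.
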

Let $J$ be fixed in Theorem \ref{Thm1-PFOF} and Theorem \ref{Thm2-PF}. We find that the convergence rate of particle filter depends on the number of particles $m$. Similarly, the  convergence of PFOF is determined by the number of blocks $N$ used  in the Ulam's method. When $N=m$, i.e., the same  number of basis functions in the two methods, the rate of convergence   is $\mathcal{O}(\frac{1}{N})$ in PFOF and  $\mathcal{O}(\frac{1}{\sqrt{m}})$ in SIR particle filter. The analysis shows  that PFOF converges faster than the particle filter.

%Particle filter does not perform well in the case that the data places very strong constraints on particle locations. This makes it difficult to design effective PF algorithms. However, the proposed Perron-Frobenius operator filter replaces the Dirac functions by indicator functions to fix the basis functions of the empirical measure. Besides, sampling from high-dimensional and complex transition kernels is also difficult to realize in PF. The PFOF avoids the sampling and uses a data-driven approximation instead, which requires short-term path simulations rather than the form of transition density.  Particle degeneracy is also a significant issue. As the number of effective particles decreases gradually, the efficiency of the particle filter becomes worse.

Sampling from high-dimensional and complex transition kernels is difficult to realize in PF. The PFOF avoids the sampling and uses a data-driven approximation instead, which requires short-term path simulations rather than the form of transition density.  Particle degeneracy is also a significant issue. As the number of effective particles decreases gradually, the efficiency of the particle filter becomes worse.

It is known  that particle filter is inefficient for high-dimensional models because of degeneracy. So the accurate estimate of posterior PDF requires a great number of particles that scales exponentially with the size of the system. In addition to resampling,  adding jitter and localisation are effective modifications to solve the problem.  The PFOF also has the ``curse of dimensionality'' problem in high dimensions as the partition scale expansion. One solution to circumvent this problem is the sparse Ulam method. The low-rank Perron-Frobenius operator filter  can enhance  the  efficiency of filtering problems.

\section{Numerical results}

\label{sect5}
In this section, we present some numerical examples  for filtering problems using the proposed PFOF. The system dynamics is unknown  and some  observations are given in the filtering problems. The PFOF and lr-PFOF are implemented to estimate posterior PDFs of the stochastic filtering problems. In Section 5.1, we consider  an Ornstein-Uhlenbeck (O-U) process to identify  the Gaussian PDF of the system and estimate its posterior PDFs with observations known. In Section 5.2, we consider a nonlinear filtering problem governed by   Bene$\breve{s}$ SDE, and  estimate the non-Gaussian posterior PDFs. In Section 5.3, we consider  a continuous-time filtering problem, which is a classical chaotic system  Lorenz'63 model with observations, to model posterior density of the state. We compare the proposed PFOF/lr-PFOF with particle filter and Extended Kalmn filter (ExKF). Numerical results show that PFOF achieves  a better  posterior PDF estimates than PF, and a more accurate state estimates than ExKF.

%%%%%%%%%%%%%%%%%%%%%%%%%%%%%%%%%%%%%%%%%%%%%%%%%%%%%%%%
\subsection{O-U process}
Let us consider an O-U process, which is a one-dimensional linear dynamical system,
\begin{equation*}
\label{O-U}
dx_t=-\lambda x_tdt+dW_t, \quad x(t_0)\thicksim \mathcal{N}(m_0,C_0),
\end{equation*}
where $\lambda>0$ and $W_t$ is a standard Brownian motion. We now consider a state-space model formed by a discretization of the O-U process and the  discrete observations of the state as follows,
\begin{equation}
\label{O-U-observ}
\left\{
\begin{aligned}
&x(t_{k+1})={\rm exp}(-\lambda\Delta t_k)x(t_k)+q_k, \quad q_k\thicksim \mathcal{N}(0,\Sigma_k), \\
&y(t_k)=Hx(t_k)+r_k,\quad  r_k\thicksim \mathcal{N}(0,R),&
\end{aligned}
\right.
\end{equation}
where $\Sigma_k={\rm exp}(-2\lambda\Delta t_k)$, $H=I$ and $R=\sigma^2$. The parameters are given by $\lambda = 1/2$, $m_0=2$, $C_0=0.1$ and $\sigma=1$. To apply PFOF,   we compute  Perron-Frobenius operator $P_{\tau}$ using Ulam's method with time step $\tau=0.1$ and obtain an approximation form $P_{\tau}^N\in\mathbb{R}^{N\times N}$ of $\mathcal{P}_{\tau}$. We take the  phase space of $x_t$ is $[-6, 6]$ and divide it  into $N=100$ grids, and each interval  $[z_k,z_{k+1}],\;k=0,\cdots,N-1$,  defines a box $\mathbb{B}_k$. We define an indicator function $\mathds{1}_{\mathbb{B}_k}(x)$ on each $\mathbb{B}_k$ and  randomly choose $n=100$ sample points in the box to calculate $P_{\tau}^N$. Given initial Gaussian distribution $\mathcal{N}(2,0.1)$, we rewrite $\mu_0$ as a vector $W_0$, which  denotes  the coefficients of $\mu_0^N$. The $P_{\tau}^N$  acts on the weight vector to estimate probability value of $x_t$ on each $\mathbb{B}_k$, i.e., $\mathbb{P}(x_t\in\mathbb{B}_k)$,    $t=q\tau,\;q=0,1,2\cdots$. Thus, we get the discrete probability density function (PDF) of $x_t$ at $t$.   The simulation  PDFs  at different times are shown  in the left column of Figure $\ref{grid1}$.  By the figure, we see that the PDFs estimated by PFO are close to the truth. By this way, the PDF is computed  without solving Fokker-Planck equation and the estimation of PDF is actually the prior density in the model  (\ref{O-U-observ}).

\begin{figure}[htbp]
  \centering
  \includegraphics[width=0.8\textwidth]{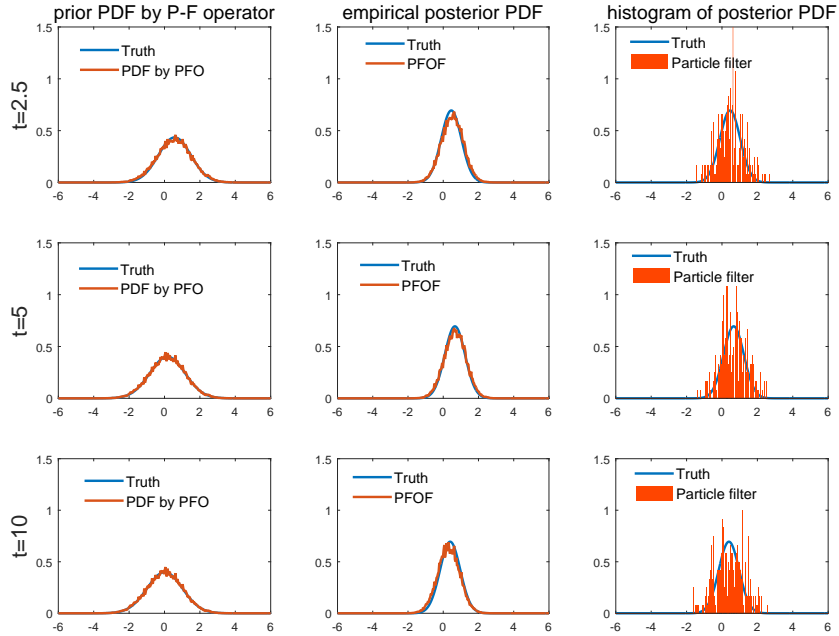}
  \caption{\footnotesize{The prior PDF estimated by PFO (left column), posterior PDF by PFOF (middle column) and posterior PDF by  particle filter (right column) at different times.}}\label{grid1}
\end{figure}

Then we compute posterior probability density of the state-space model (\ref{O-U-observ}). We set $N=500$ and $n=100$. The posterior probability density function is estimated  by Algorithm \ref{alg:1} and the results are displayed in the middle column of Figure $\ref{grid1}$. From Figure $\ref{grid1}$, we find  that the empirical posterior PDFs estimated by PFOF are close to the Gaussian posterior densities. To make comparison with PFOF,  the  particle filter is also used for the filtering problem. In the particle filter, $500$ particles are drawn  randomly to generate Dirac measure and  construct empirical measure. Thus, the number of basis functions is equal to each other in the two methods. Figure $\ref{grid1}$ clearly shows  that the empirical PDF calculated by PFOF is more accurate than that by PF.  The numerical results support Theorem \ref{Thm1-PFOF} and Theorem \ref{Thm2-PF}.

%\begin{figure}[htbp]
%  \centering
%  \includegraphics[width=6.5in]{OU_cp}
%  \caption{\footnotesize{The PDF estimated by PFO comparing with the particle filter at different times.}}\label{grid2}
%\end{figure}

%%%%%%%%%%%%%%%%%%%%%%%%%%%%%%%%%%%%%%%%%%%%%%%%%%%%%%%%%%%%%%
\subsection{Bene$\breve{s}$-Daum filter}
In this subsection, we apply PFOF to a nonlinear filtering problem, whose state-space model is defined by the Bene$\breve{s}$ stochastic difference equation,
\begin{equation}
\label{Benes}
dx_t={\rm tanh}(x_t)dt+dW_t,
\end{equation}
with initial condition $x_0=0$. Refer to \cite{SDE}, the probability density function of the equation (\ref{Benes}) is given by
\[
p(x(t))=\frac{1}{\sqrt{2\pi t}}\frac{{\rm cosh}(x(t))}{{\rm cosh}(x_0)}{\rm exp}\big(-\frac{t}{2}\big){\rm exp}\big(-\frac{1}{2t}(x(t)-x_0)\big).
\]
We take  the phase space $[-15,  15]$ and uniformly divide it  into $100$ ($N=100$)  grids $[z_{k},z_{k+1}],\; k=0,...,N-1$, each of which  corresponds  to a box $\mathbb{B}_k$.  The  Ulam's method  is used to approximate PFO. The time step  is set as $\tau=0.5$ and the number of random sample points  $m=400$. The predicted PDF of $x_t$ at $t=1$, $t=2.5$ and $t=5$ are  shown in Figure $\ref{grid2}$. The PDFs are separately estimated by discretized PFO matrix $P^N\in \mathbb{R}^{100\times100}$ and  low-rank approximation of PFO with truncation $r=30$. We see that PDFs at $t=2.5$ and $t=5$ have two modes and the PFO can fairly approximate the two modes.

\begin{figure}[htbp]
  \centering
  \includegraphics[width=0.8\textwidth]{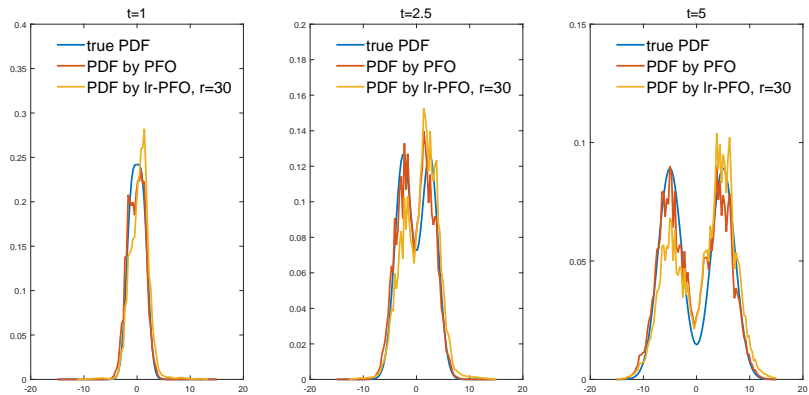}
  \caption{\footnotesize{The PDF estimated by PFO and low-rank model at different times.}}\label{grid2}
\end{figure}

First we want to calculate the truth  posterior filtering distribution of the model (\ref{Benes}) subject to observation. In this example,  the observation model satisfies
\begin{equation}
\label{Benes-ob}
p(y_k|x(t_k))=\mathcal{N}(y_k|x(t_k),\sigma^2).
\end{equation}
According  to \cite{SDE} (Chapter 10.5), the transition density of the Bene$\breve{s}$ SDE is given by
\begin{equation*}
p(x(t_k)|x(t_{k-1}))=\frac{1}{\sqrt{2\pi\Delta t_{k-1}}}\frac{{\rm cosh}(x(t_k))}{{\rm cosh}(x(t_{k-1}))} {\rm exp}(-\frac{1}{2}\Delta t_{k-1})\times {\rm exp}\bigg(-\frac{1}{2\Delta t_{k-1}}(x(t_k)-x(t_{k-1}))^2\bigg),
\end{equation*}
where $\Delta t_{k-1}=t_k-t_{k-1}$. If we assume that the filtering solution at time $t_{k-1}$ is of the form
\begin{equation*}
p(x(t_{k-1})|y_{1:k-1})\propto {\rm cosh}(x(t_{k-1})){\rm exp}\bigg(-\frac{1}{2P_{k-1}}(x(t_{k-1})-m_{k-1})^2\bigg)
\end{equation*}
for given  $m_{k-1}$ and $P_{k-1}$. Then we use the Chapman-Kolmogorov equation and give the prior density
\begin{equation*}
\label{b_prior}
p\big(x(t_k)|y_{1:k-1}\big)\propto {\rm cosh}\big(x(t_k)\big){\rm exp}\bigg(-\frac{1}{2P_k^{-}}(x(t_k)-m_k^{-})^2\bigg),
\end{equation*}
where
\[ m_k^{-}=m_{k-1},
\]

\[P_k^{-}=P_{k-1}+\Delta t_{k-1}.
\]
The $m_k^{-}$ and $P_k^{-}$ are sufficient statistics representing prior density functions. By Bayes' formula, the posterior density of $x(t_k)$ is given by
\begin{equation}
\label{b_posterior}
p\big(x(t_k)|y_{1:k}\big)\propto {\rm cosh}\big(x(t_k)\big){\rm exp}\bigg(-\frac{1}{2P_k}\big(x(t_k)-m_k\big)^2\bigg),
\end{equation}
where the equations of parameters $m_k$ and $P_k$ in the posterior density satisfy
\[ m_k=m_k^{-}+\bigg(\frac{P_k^{-}}{P_k^{-}+\sigma^2}\bigg)(y_k-m_k^{-}),
\]

\[P_k^{-}=P_{k-1}+\Delta t_{k-1}.
\]
Thus, the reference posterior distribution is defined by (\ref{b_posterior}).

To apply PFOF to the nonlinear filtering problem, we make  a finer division of the phase interval $[-15,15]$ to obtain $400$ boxes. Besides, we choose enough sample points in Ulam's method to reduce error of Monte-Carlo  as much as possible. The observations $y_k$ are artificially obtained by simulating the underlying model (\ref{Benes}) and adding noise according to (\ref{Benes-ob}), where $\sigma=1$. The observable interval is $[0,5]$ with a time step $\Delta t_k=0.1$. The initial distribution for the filtering process  is chosen to be $m_0=0$, $P_0=2$. Particularly, we also use the particle filter as a comparison.  In the prediction, we are not allowed to draw sample points directly  because of a complex transition probability density function. We use Acceptance-Rejection method to resolve the issue. We first show the results of posterior mean estimated by PFOF and lr-PFOF (r=40) in Figure $\ref{grid3}$, together with truth and observations. The mean is obtained by averaging the posterior distribution of PFOF/lr-PFOF and it is close to the truth as the figure shows.

\begin{figure}[htbp]
  \centering
  \includegraphics[width=0.5\textwidth]{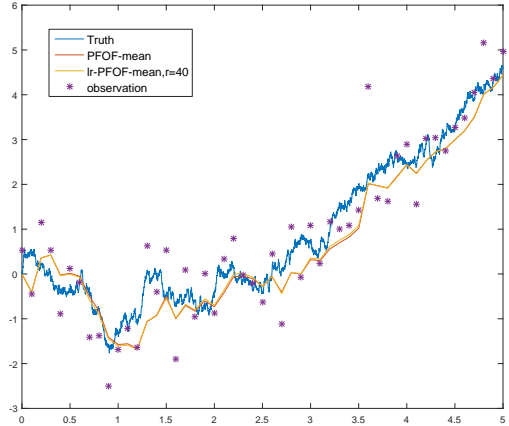}
  \caption{\footnotesize{The mean estimated by PFOF and lr-PFOF.}}\label{grid3}
\end{figure}

The posterior densities estimated by PFOF, lr-PFOF and particle filter are shown in Figure $\ref{grid4}$ together with the truth. The truncation parameters in lr-PFOF are separately set as $r=10$, $r=20$ and $r=40$. The estimation accuracy  of lr-PFOF gradually improves as the number of truncation basis functions increases, and achieves almost the same as PFOF when $r=40<N=400$. Although the number of basis functions is  the same in both PFOF and particle filter, there exit clear difference  between the two methods. The results show that the accuracy of PFOF is higher than that of particle filter  in the non-Gaussian and nonlinear filtering problem. This further confirms the theoretical analysis in Section \ref{sect3}.  As shown in Table \ref{table1},  both PFOF and lr-PFOF use less  CPU-time than SIR particle filter does. Actually, the CPU-time  in particle filter is mainly from  Acceptance-Rejection sampling. From the table, it can be seen that lr-PFOF can reduce online computation  time comparing to PFOF.

\begin{figure}[htbp]
  \centering
  \includegraphics[width=1\textwidth]{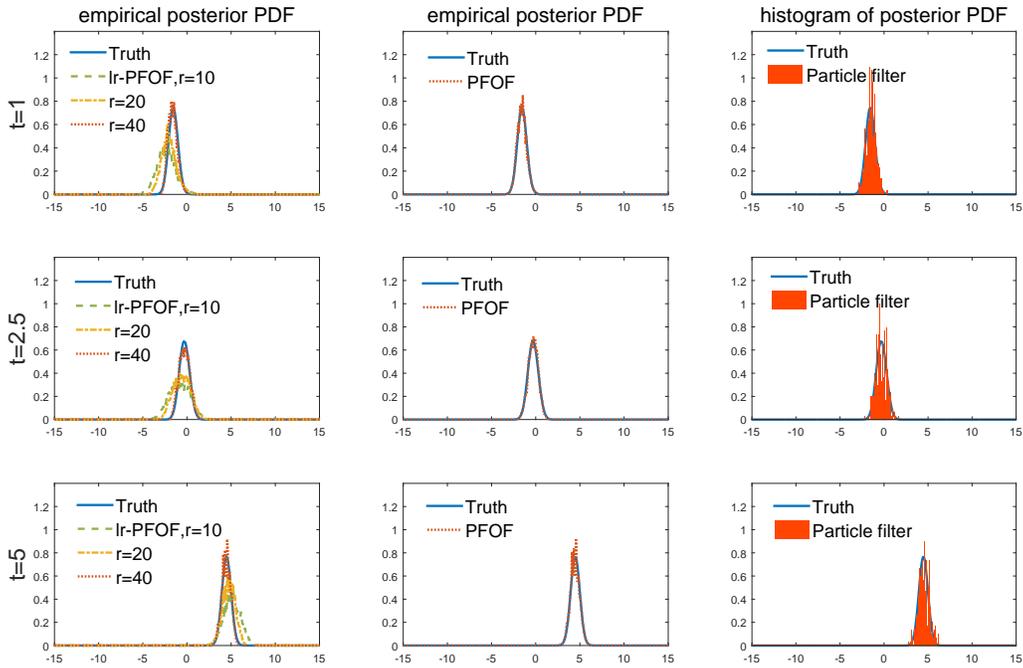}
  \caption{\footnotesize{The posterior PDF by lr-PFOF (left column), PFOF (middle column) and particle filter (right column) at different times.}}\label{grid4}
\end{figure}

\begin{table}[htbp]
	\centering
	\setlength{\abovecaptionskip}{0cm}
	\setlength{\belowcaptionskip}{0.2cm}
	\caption{CPU-time (seconds) for posterior PDF with different methods.}
	\label{sec4_exper1:morlet_table_hyper}
	\scalebox{0.8}{
		\begin{tabular}{cccccc}
			\toprule
			Methods & PFOF & lr-PFOF (r=10) & lr-PFOF (r=20) & lr-PFOF (r=40) & particle filter \\
			\midrule
			& & & & & \\[-6pt]
			offline &0.1599 &0.2536 &0.2649 &0.2689 &\multirow{2}*{6906.9486} \\
			\cline{1-5}
			& & & & & \\[-6pt]
			online &0.0673 &0.0150 &0.0431 &0.0641 & \\
			\bottomrule
		\end{tabular}
	}\label{table1}
\end{table}
%%%%%%%%%%%%%%%%%%%%%%%%%%%%%%%%%%%%%%%%%%%%%%%%%%%%%%%%%%%
\subsection{Lorenz'63 model}
Lorenz developed a mathematical model for atmospheric convection in 1963. The Lorenz'63 model is the simplest continuous-time system to exhibit sensitivity to initial conditions and chaos, and it is popular example used  for data assimilation. For some parameters and initial conditions, the system may perform a chaotic behaviour. The model consists of  three coupled nonlinear ordinary differential equations with the solution $v=(v_1,v_2,v_3)\in\mathbb{R}^3$. We consider the Lorenz'63 model with additive white noise,
\[
\left\{
\begin{aligned}
\frac{dv_1}{dt}&=a(v_2-v_1)+\sigma_1\frac{dW_1}{dt} \\
\frac{dv_2}{dt}&=-av_1-v_2-v_1v_3+\sigma_2\frac{dW_2}{dt} \\
\frac{dv_3}{dt}&=v_1v_2-bv_3-b(r+a)+\sigma_3\frac{dW_3}{dt}\\
v(0)&\thicksim\mathcal{N}(m_0,C_0),
\end{aligned}
\right.
\]
where $W_j$ are Brownian motions assumed to be independent.  We use the classical parameter values $(a,b,r)=(10,\frac{8}{3},28)$ and set $\sigma_1=\sigma_2=\sigma_3=2$. The initial mean $m_0$ is given by $(0,0,0)$ and covariance matrix is an identity matrix $I_3\in\mathbb{R}^{3\times 3}$.  We give the continuous observation $z(t)$, which is governed by a SDE
\[
\left\{
\begin{aligned}
\frac{dz}{dt}&=h(v)+\gamma\frac{dW_z}{dt}\\
z(0)&=0,
\end{aligned}
\right.
\]
with $\gamma=0.2$. The purpose of this example is to explore the performance of PFOF in continuous-time filtering problems. We compare the assimilation  results based on Perron-Frobenius operator and continuous-time Extended Kalman filter. The posterior means estimated by the  two methods are shown in Figure \ref{grid5} and Figure \ref{grid7}. The two figures are corresponding to different  observations $h(v)=Hv$,  where the former is determined by $H=[0,1,0]$ and the latter is determined by $H=[0,0,1]$. In particular, we find that the choice of observations in Lorenz models is quite influential, especially for ExKF. The stability of  ExKF significantly depends on the observation. Because the insufficient observations may keep the filter away from the truth and cause significant model error, and it may easily lead to the numerical instability   once the deviation occurs. However, the results of reconstruction by PFOF much less  affected by observation model, so the method shows much better  robustness than ExKF.

Figure \ref{grid6}  shows  the consequence of mean-square error with  $v_2$ or $v_3$ as the different observation. For ExKF, we find that there is a large error in estimating mean by ExKF when the third component $v_3$  is observed. To better visualize  the results, we compare the trajectories of mean obtained by PFOF and ExKF in Figure \ref{grid8} together with truth. We find the trajectory  mean of PFOF agrees with the truth more than the the trajectory  mean of ExKF.

\begin{figure}[htbp]
  \centering
  \includegraphics[width=1\textwidth]{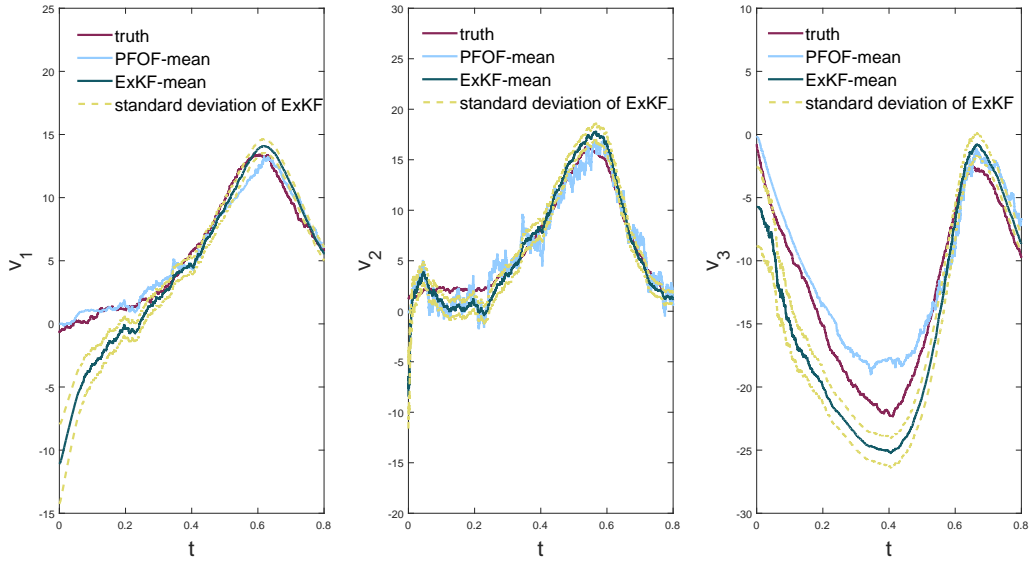}
  \caption{\footnotesize{The posterior mean of each component by ExKF and PFOF in Lorenz'63 model with continuous observation. The component $v_2$ is observed.}}\label{grid5}
\end{figure}

\begin{figure}[htbp]
  \centering
  \includegraphics[width=0.6\textwidth]{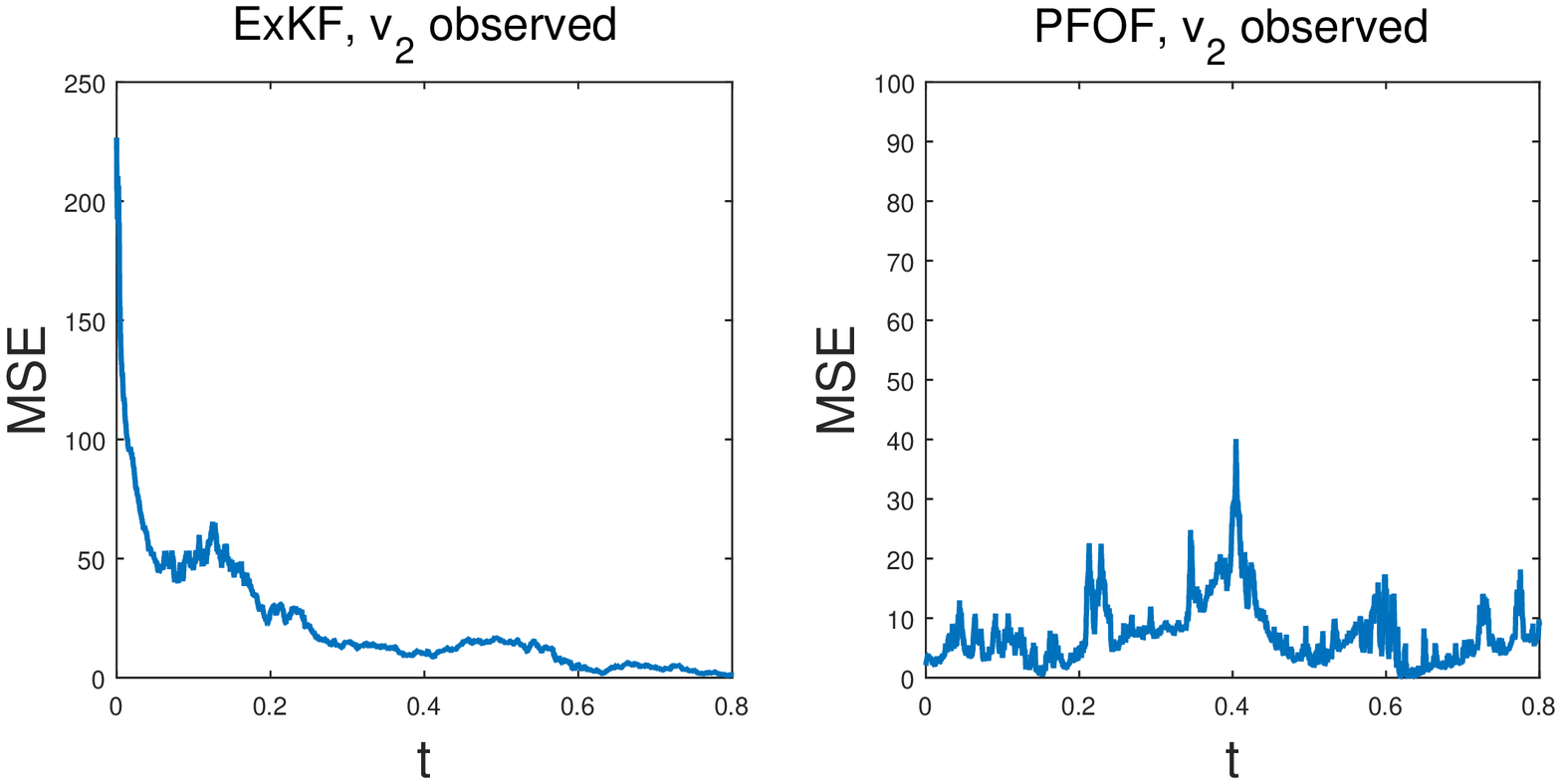}\\
   \includegraphics[width=0.6\textwidth]{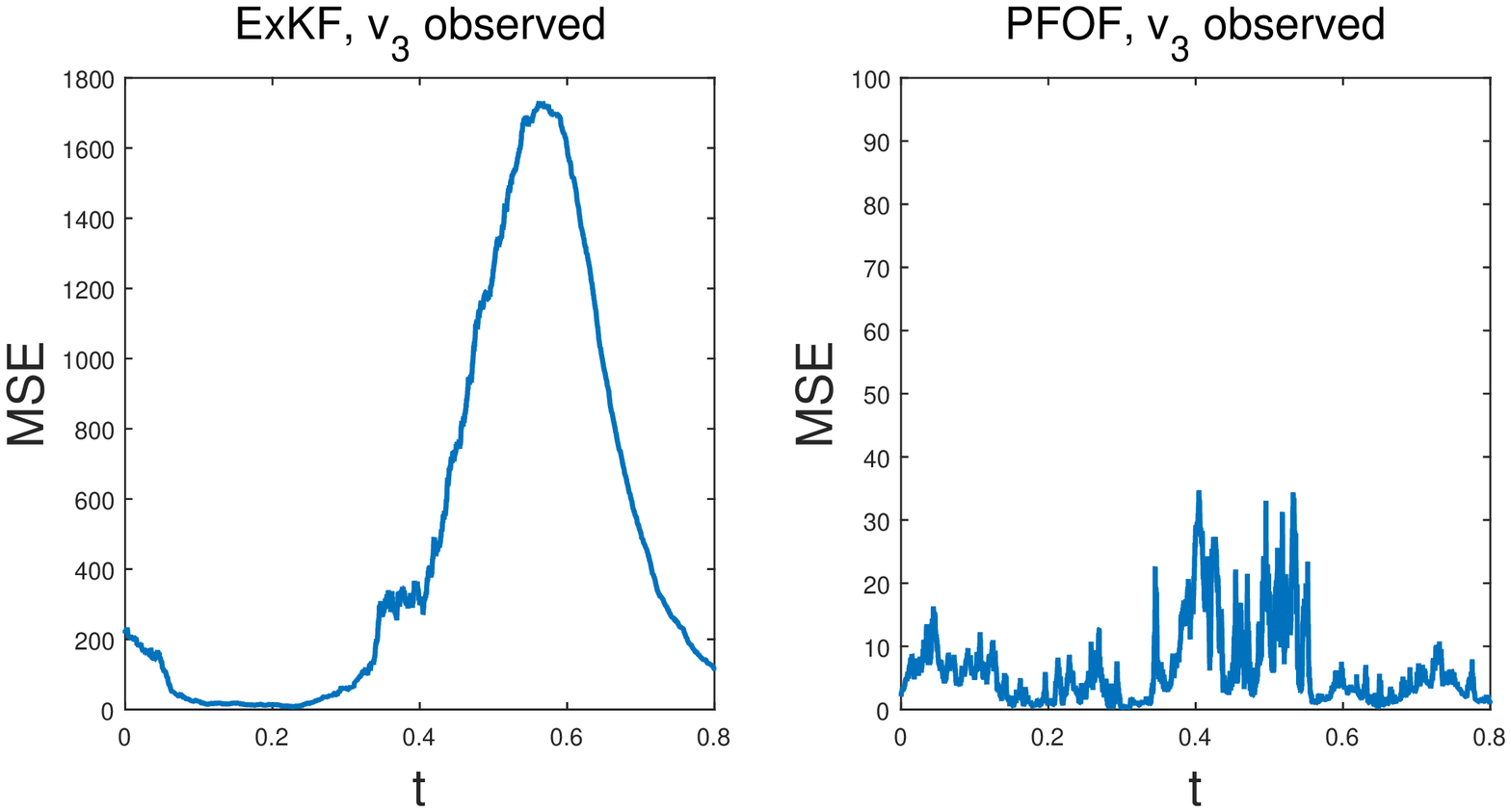}
  \caption{\footnotesize{The mean-square error $\|v(t)-m(t)\|_2^2$ of filters.}}\label{grid6}
\end{figure}

\begin{figure}[htbp]
  \centering
  \includegraphics[width=1\textwidth]{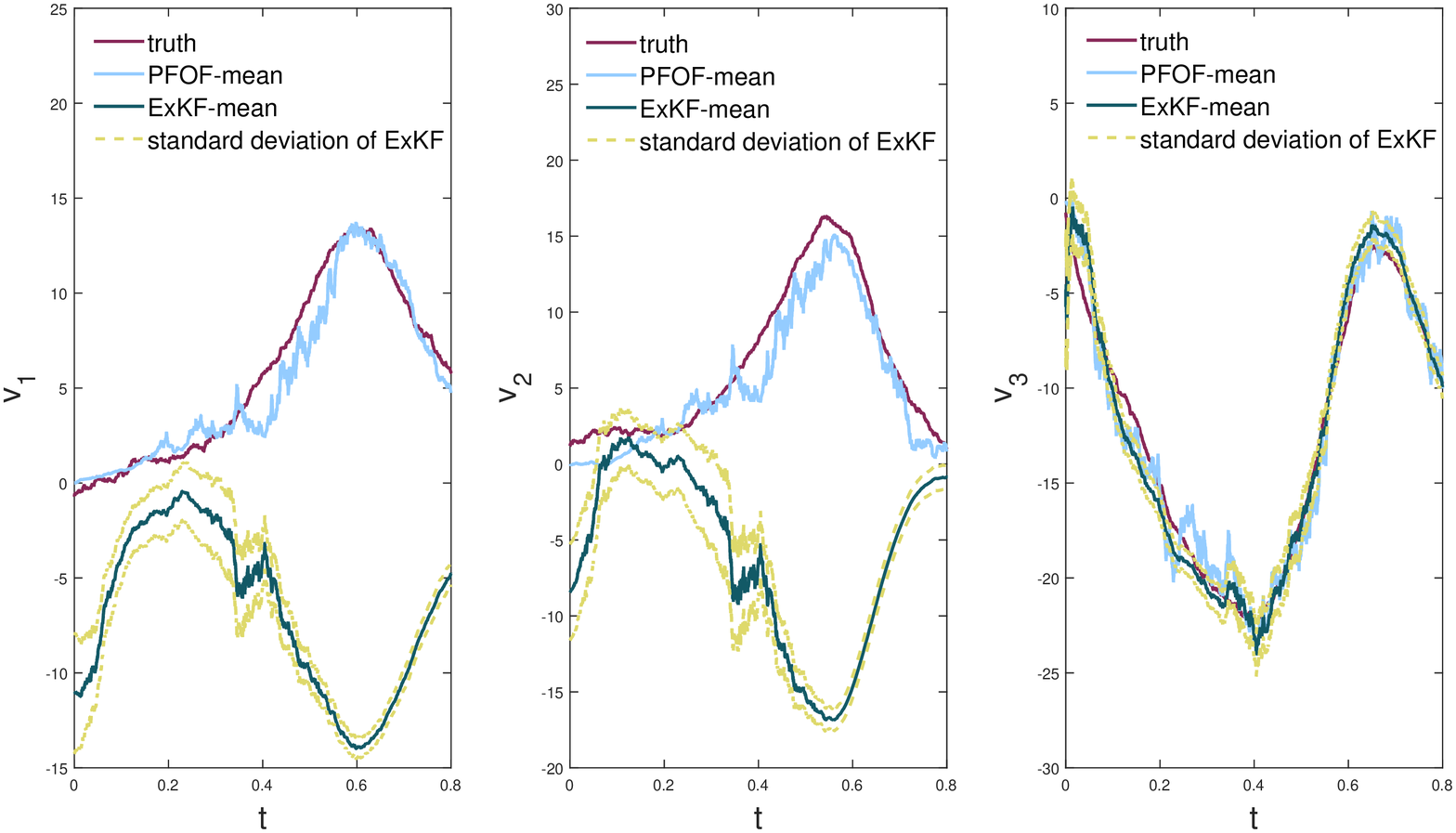}
  \caption{\footnotesize{The posterior mean of each component by ExKF and PFOF in Lorenz'63 model with continuous observation. The component $v_3$ is observed.}}\label{grid7}
\end{figure}

\begin{figure}[htbp]
  \centering
  \includegraphics[width=0.8\textwidth]{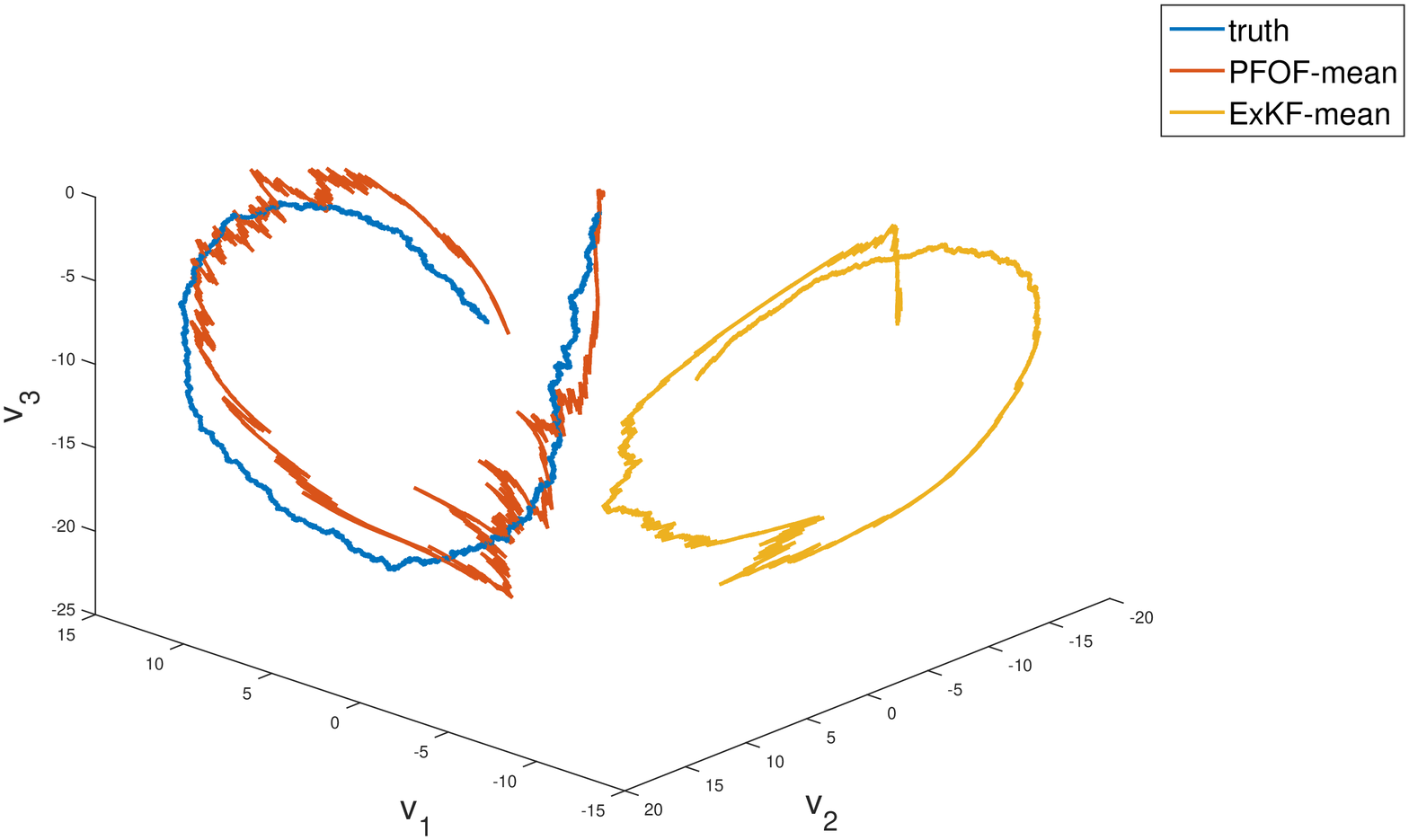}
  \caption{\footnotesize{The trajectories of mean by PFOF and ExKF.}}\label{grid8}
\end{figure}

For $v_3$ as an observation, the one-dimensional and two-dimensional marginal probability distributions are displayed in Figure $\ref{grid9}$. The figure aims to intuitively describe distribution of the single value and correlation  of the different components. As shown in the figure, one-dimensional marginal distributions of the observed component are closer to Gaussian distributions than the other two components. This phenomenon reflects that when a component is used as an observation, its mean estimates will be more accurate than the other unobserved  components.

%In addition, the two-dimensional marginal densities get closer to Gaussian distributions as time increases.  This indicates that the PDF deviates from the truth non-Gaussian distribution. The reason for the phenomenon is that the cumulative approximation error between the measure $\mu_j^N$ and $\mu_j$ becomes large when the time index $j$ increases, as described in Theorem \ref{Thm1-PFOF}.

\begin{figure}[htbp]
  \centering
  \includegraphics[width=0.6\textwidth]{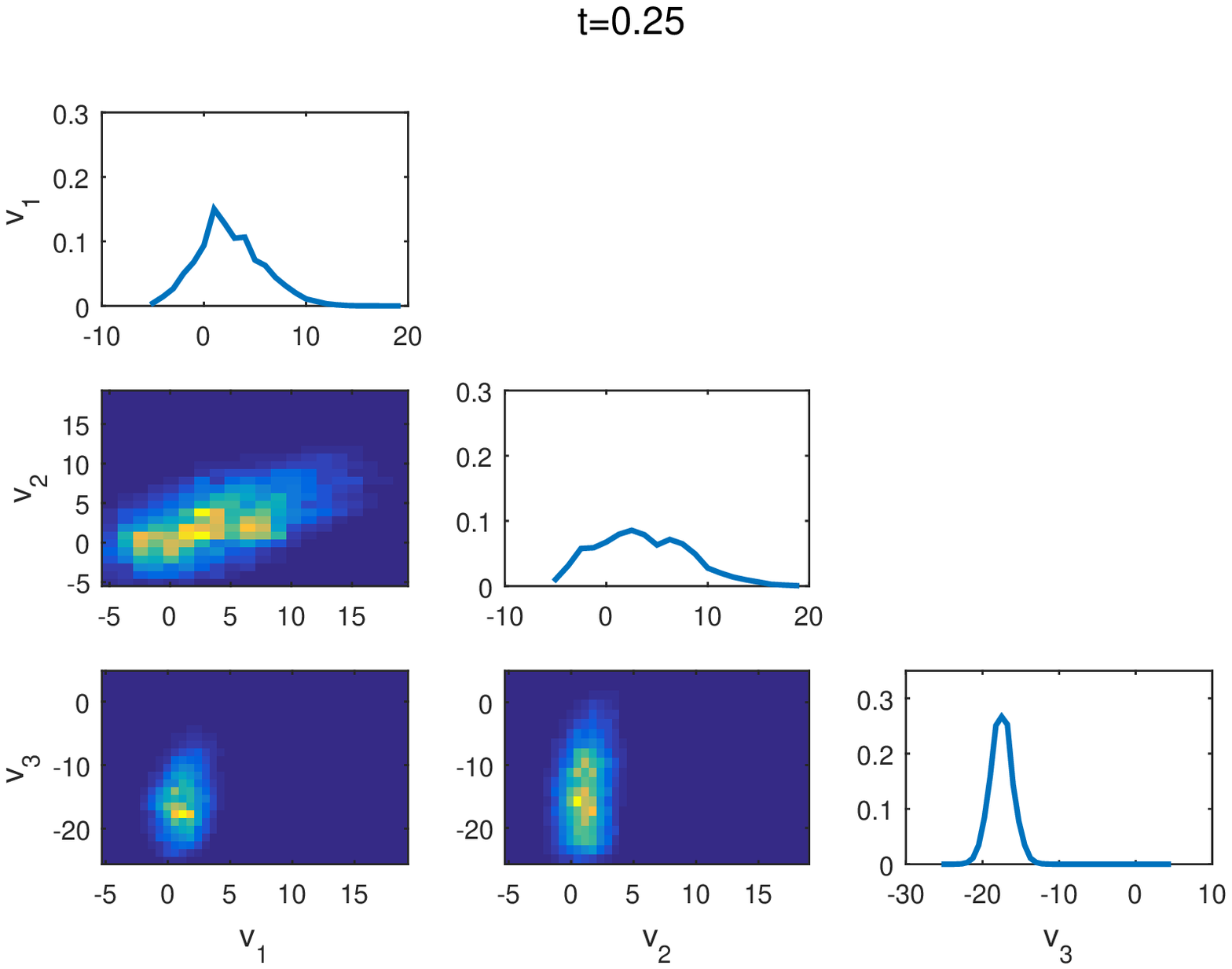}
   \includegraphics[width=0.6\textwidth]{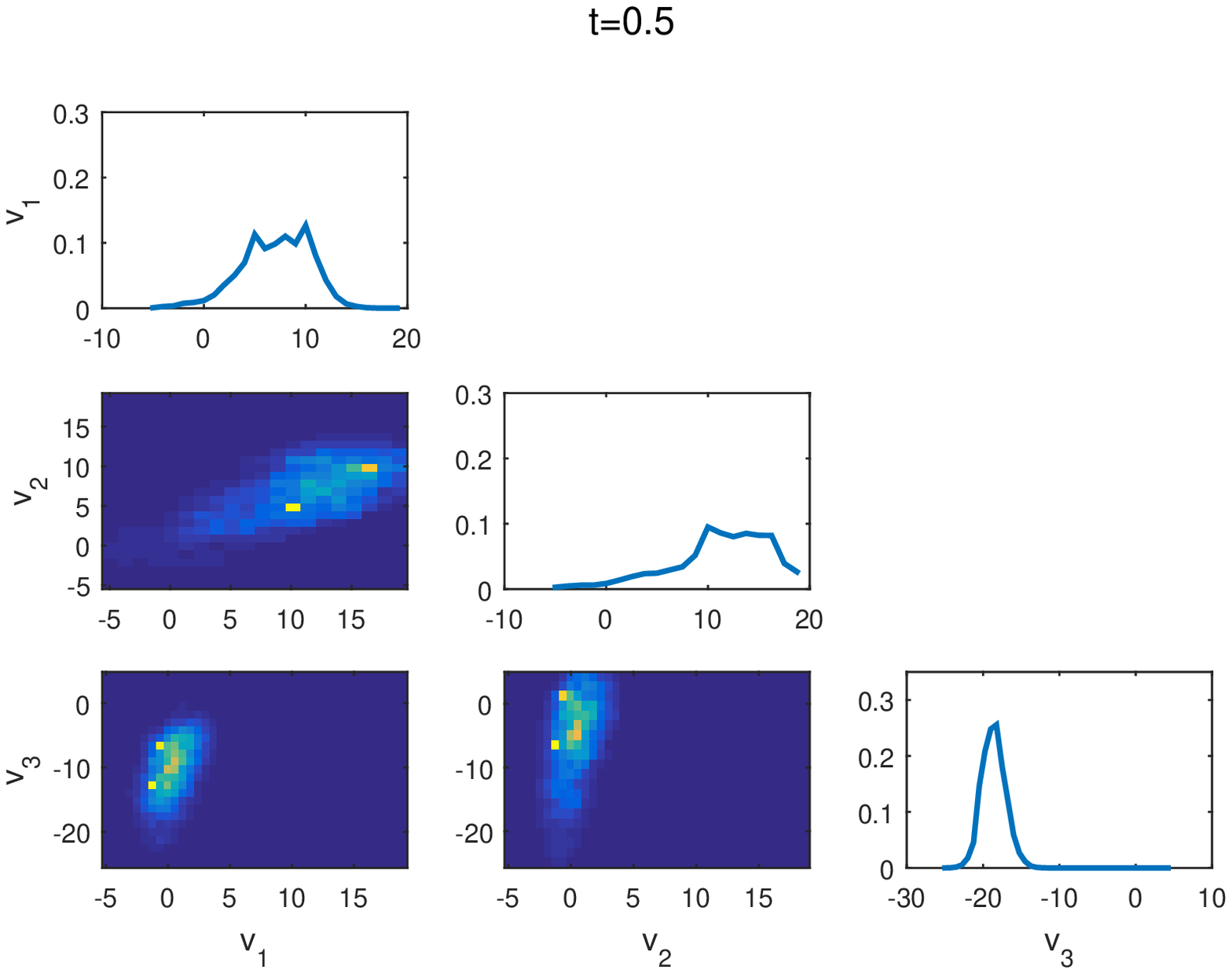}
   \includegraphics[width=0.6\textwidth]{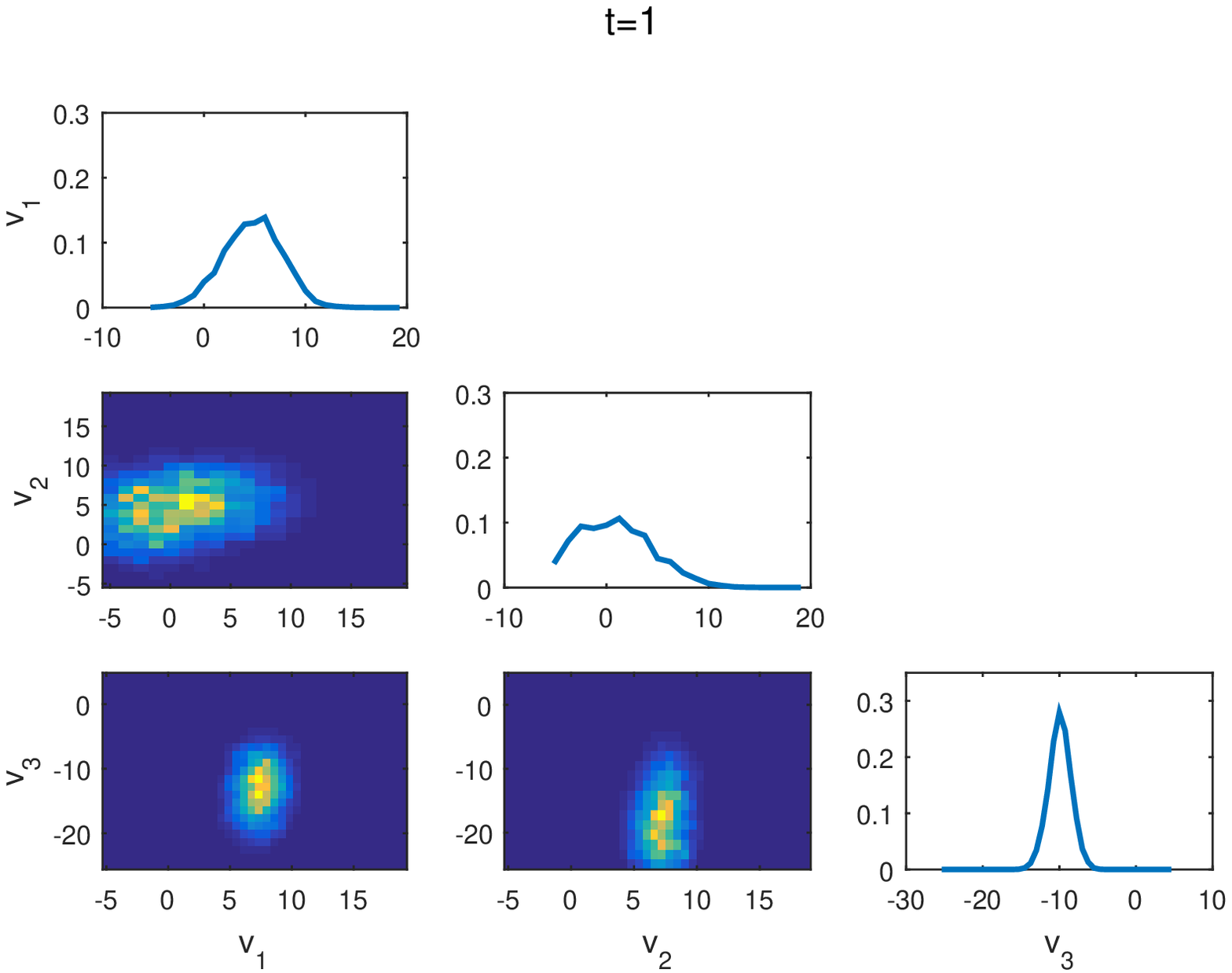}
  \caption{\footnotesize{1-D and 2-D posterior marginal probability density functions of $v$ .}}\label{grid9}
\end{figure}

The results above show that PFOF  has a higher accuracy for  state estimates than ExKF in this chaotic nonlinear system. The former can also give estimates of probability density functions to gain  more information of the state in the probabilistic sense.

%%%%%%%%%%%%%%%%%%%%%%%%%%%%%%%%%%%%%%%%%%%%%%%%%%%%%%%%%%

  %%%%%%%%%%%%%%%%%%%%%%%%%%%%%%%%%%%%%%%%%%
\section{Conclusions}

\label{sect6}
A new filtering method was proposed to estimate filtering distribution of the state under the framework of Perron-Frobenius operator. We formulated filtering problems for discrete and continuous stochastic dynamical systems and applied the Perron-Frobenius operator to  propagation of the posterior probability density function. The finite-dimensional approximation of the PFO  was realized by Ulam's method, which provides a Galerkin projection space spanned by indicator functions.   With Ulam's method, the posterior PDF was discretized and expressed by the weights of basis functions. Then the evolution of PDF became the transition of the weights vectors, which were iterated by PFO and likelihood function. This procedure was called Perron Frobenius operator filter. Thus, the empirical PDF was determined  by a convex combination of indicator functions. We gave an error estimate of the proposed method and proved that its accuracy is higher than that of particle filters. Furthermore, a low-rank Perron-Frobenius operator filter was presented to approximate density functions via spectral decomposition. The decomposition was realized by eigendecomposition of discretized PFO.  Finally, the proposed  method was implemented for three stochastic filtering problems, which included  a linear discrete system,  a nonlinear discrete system and  a nonlinear continuous chaotic system. The numerical  results showed that the proposed method has better accuracy and better robustness compared with particle filters and ExKF.

%%%%%%%%%%%%%%%%%%%%%%%%%%%%%%%%%%%%%%%%%%%%%%%%%%%%%%%%%%%
%\smallskip
%\bigskip
%\textbf{Conflict of interest statement:}
%The authors have no conflicts of interest to declare. All co-authors have seen and agree with the contents of the manuscript and there is no financial interest to report.
%
%%%%%%%%%%%%%%%%%%%%%%%%%%%%%%%%%%%%%%%
%\smallskip
%\bigskip
%\textbf{Data availability:}
%Data will be made available on request.

%%%%%%%%%%%%%%%%%%%%%%%%%%%%%%%%%%%%%%%%%%%%%%%%%%%%%%%%%%%%%%
\smallskip
\bigskip
\textbf{Acknowledgement:}
L. Jiang acknowledges the support of NSFC 12271408 and the
Fundamental Research Funds for the Central Universities.

%%%%%%%%%%%%%%%%%%%%%%%%%%%%%%%%%%%%%%%%%%%%%%%%%%%%%%%%%%%%

\end{document}